\def \tp{\tilde{p}}
\def \R{\mathbb{R}}
\def\pw{p_D^{(w)}}
\def\tpw{\tilde{p}^{(w)}}
\newtheorem{theorem}{Theorem}[section]
\newtheorem{lemma}[theorem]{Lemma}
\newtheorem{proposal}[theorem]{Proposition}
\newtheorem{corollary}[theorem]{Corollary}
\begin{document}

\title[Heat kernel estimates of fractional Schrödinger operators]{Heat kernel estimates of fractional Schrödinger operators with Hardy potential on half-line}
\author{Tomasz Jakubowski}
\address{Wroc\l{}aw University of Science and Technology}
\email{tomasz.jakubowski@pwr.edu.pl}
\author{Pawe\l{} Maciocha}
\address{Wroc\l{}aw University of Science and Technology}
\email{pawel.maciocha@pwr.edu.pl}

\subjclass[2010]{Primary  	47D08, 60J35 ; Secondary  31C05, 47H14}
\keywords{Fractional Laplacian, Dirichlet heat kernel, Hardy potential}

\begin{abstract}
We provide sharp two-sided estimates of the heat kernel of the Dirichlet fractional Laplacian on the half-line perturbed by the Hardy potential.

\end{abstract}

\maketitle

\section{Introduction}
	Let $\alpha \in (0,2)$ and $D=(0,\infty) \subset \R.$ We consider the following Schr{\"o}dinger operator on $D$ with the Hardy potential, 
	\begin{equation}\label{eq:SchrOp}
		\Delta_D^{\alpha/2} + \kappa x^{-\alpha}.
	\end{equation}
	Here, $\Delta_D^{\alpha/2}$ is the fractional Laplacian with Dirichlet condition on the half line and $\kappa x^{-\alpha}$ is a perturbing Hardy potential. A positive number $\kappa$ is defined as follows, 
	\begin{equation*} \label{eq:kappadelta}
		\kappa := \kappa_{\delta} = \frac{\Gamma(\delta+\alpha/2)\Gamma(1-\delta+\alpha/2)}{\Gamma(\delta)\Gamma(1-\delta)}, \qquad \delta \in (0,1).
	\end{equation*}
	
	The main purpose of this article is to investigate the properties of the heat kernel $\tp(t,x,y)$ of the operator $L \coloneqq \Delta_{D}^{-\alpha/2} + q(x),$ where $q(x) = \kappa x^{-\alpha}.$ 
	Our main theorem is as follows. 
	\begin{theorem}\label{thm:main}
		For $\delta \in (0,1/2]$, the Schr\"odinger operator \eqref{eq:SchrOp} has the heat kernel $\tilde{p}(t,x,y)$, which is jointly continuous on $(0,\infty)\times D \times D$, and satisfies the following estimates
		\begin{align}\label{eq:mainThmEst}
			\tilde{p}(t,x,y) \approx  \left( 1 \wedge \frac{x}{t^{1/\alpha}} \right)^{\alpha/2 - \delta} \left( 1 \wedge \frac{y}{t^{1/\alpha}} \right)^{\alpha/2 - \delta}  \left( t^{-1/\alpha}\wedge \frac{t}{|x-y|^{1+\alpha}} \right), \quad x,y \in D,\ t>0.
		\end{align}
	\end{theorem}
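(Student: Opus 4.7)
The plan is to construct $\tilde{p}$ via a Schr\"odinger perturbation series using the Dirichlet heat kernel $p_D(t,x,y)$ of $\Delta_D^{\alpha/2}$ as the unperturbed kernel, for which sharp two-sided estimates (corresponding to the case $\delta=0$) are already known:
\begin{equation*}
p_D(t,x,y)\approx \left(1\wedge\tfrac{x}{t^{1/\alpha}}\right)^{\alpha/2}\left(1\wedge\tfrac{y}{t^{1/\alpha}}\right)^{\alpha/2}\left(t^{-1/\alpha}\wedge\tfrac{t}{|x-y|^{1+\alpha}}\right).
\end{equation*}
Setting $p_0\defeq p_D$ and $p_{n+1}(t,x,y)\defeq\int_0^t\!\!\int_D p_D(t-s,x,z)\,q(z)\,p_n(s,z,y)\,dz\,ds$, one formally has $\tilde p=\sum_{n\ge 0}p_n$. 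I would first verify convergence, symmetry, Chapman--Kolmogorov and joint continuity of this series on compact subsets of $D\times D$ away from $0$, extending afterwards by means of the integrability of $p_D(\cdot,x,z)q(z)$ near the boundary.

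For the upper bound, denote the right-hand side of \eqref{eq:mainThmEst} by $R(t,x,y)$. The core analytic step is a 3G-type self-improvement inequality
\begin{equation*}
\int_0^t\!\!\int_D p_D(t-s,x,z)\,q(z)\,R(s,z,y)\,dz\,ds\le C\,R(t,x,y),\qquad t>0,\ x,y\in D.
\end{equation*}
By the exact scaling $\tilde p(t,x,y)=t^{-1/\alpha}\tilde p(1,xt^{-1/\alpha},yt^{-1/\alpha})$ one can reduce to $t=1$, and then the integral is split according to whether $s$ is close to $0$ or to $1$, and whether $z$ lies in the boundary strip, near $x$, near $y$, or in between. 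Once this inequality is in place, standard Schr\"odinger-perturbation technique (as in Bogdan--Jakubowski) yields $\tilde p\le C_T R$ on bounded time intervals, and the restriction is removed by Chapman--Kolmogorov together with the doubling property of $R$.

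For the lower bound one uses that $q\ge 0$ immediately gives $\tilde p\ge p_D$, which already matches $R$ whenever $x\wedge y\gtrsim t^{1/\alpha}$, and also provides the correct factor $t^{-1/\alpha}\wedge t/|x-y|^{1+\alpha}$ in all regimes. For the boundary correction, when $x$ (say) is much smaller than $t^{1/\alpha}$, I would use $\tilde p\ge p_0+p_1$ and bound $p_1$ from below by restricting the defining integral to $s\in(t/4,3t/4)$ and to a thin strip $z\sim \varepsilon t^{1/\alpha}$; the singularity $q(z)\asymp z^{-\alpha}$ and the sharp lower bound on $p_D(t-s,x,z)$ there combine to produce the exact factor $(1\wedge x/t^{1/\alpha})^{\alpha/2-\delta}$. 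The symmetric construction handles small $y$, and the combined boundary factor is obtained by iterating once more if both $x$ and $y$ are small, or directly from a product lower bound via Chapman--Kolmogorov applied to $\tilde p$.

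The hard part will be the 3G inequality. Because $q(z)=\kappa z^{-\alpha}$ is \emph{not} integrable near $0$, the estimate must exploit precisely the $(1\wedge z/t^{1/\alpha})^{\alpha/2-\delta}$ decay of $R(s,z,y)$ in the variable $z$ to tame the Hardy singularity; this is where the sharp constant $\kappa_\delta$ enters and the restriction $\delta\le 1/2$ surfaces (ensuring $\alpha/2-\delta>-\alpha/2$, so $\int_0^{1} z^{-\alpha}\,z^{\alpha/2-\delta}\,dz$ has the correct convergence behavior on the relevant scale). Carrying out this bound uniformly over the four regimes---near-diagonal vs.\ far, interior vs.\ boundary---is the technical heart of the argument and the main obstacle I anticipate.
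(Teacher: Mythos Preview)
There are two genuine gaps in your plan.

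\textbf{Upper bound.} The 3G-type inequality you propose, even if it holds, does not close the argument. Because $q(z)=\kappa z^{-\alpha}$ and the target profile $R$ share the exact scaling of $p_D$, both sides of your inequality scale identically; the constant $C$ is therefore independent of $t$, and there is no small-time regime in which $C<1$. The iteration $p_n\le C^n R$ then diverges unless $C<1$ globally, and nothing in your outline forces this---the Bogdan--Jakubowski technique you cite is precisely for Kato-class potentials, where small time makes the relative bound small, and the introduction explains that $\kappa x^{-\alpha}$ is \emph{not} in the Kato class. What replaces small-$t$ smallness in the paper is the exact invariance of $h(x)=x^{-\delta}$ under the weighted perturbed kernel $\tilde p^{(w)}(t,x,y)=x^{-\alpha/2}\tilde p(t,x,y)y^{\alpha/2}$: Theorem~\ref{thm:tpInt} shows $\int_D \tilde p^{(w)}(t,x,y)\,y^{-\delta}\,dy = x^{-\delta}$. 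This identity is where the exact value $\kappa=\kappa_\delta$ enters; it gives an a priori mass bound on $\tilde p$ (Lemma~\ref{lemma:tpIntApprox}), after which a bootstrap through several spatial regimes (Lemmas~\ref{lemma:tpxSmallyLargeIntApprox}--\ref{lemma:tpUpperBound}) yields the pointwise upper bound. Your 3G route bypasses this invariant-function step and therefore has no mechanism to control the series for $\kappa$ near $\kappa^*$.

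\textbf{Lower bound.} Your claim that $p_1$ alone produces the boundary factor $(x/t^{1/\alpha})^{\alpha/2-\delta}$ is incorrect. For $t=1$, $x\ll 1$ and $y\asymp 1$, the $x$-dependence of $p_1(1,x,y)=\int_0^1\!\int_D p_D(1-s,x,z)\,q(z)\,p_D(s,z,y)\,dz\,ds$ comes only through the first factor, which is $\asymp x^{\alpha/2}$; the $z$-integration does not interact with $x$, so $p_1\asymp x^{\alpha/2}$, the same order as $p_0$. No finite partial sum $p_0+\cdots+p_N$ improves the exponent. The paper's lower bound (Section~\ref{sec:Lb}) instead feeds back the already established upper bound together with the invariance $\int \tilde p^{(w)} y^{-\delta}\,dy=x^{-\delta}$: setting $H(x)=1+x^{-\delta}$ and $q(t,x,y)=\tilde p^{(w)}(t,x,y)/(H(x)H(y))$, one shows the $q$-mass on an annulus is bounded below and then propagates via Chapman--Kolmogorov.

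A smaller point: your explanation of where $\delta\le 1/2$ enters (via convergence of $\int_0^1 z^{-\alpha}z^{\alpha/2-\delta}\,dz$) is not the actual mechanism. The constraint comes from the fact that $\delta\mapsto\kappa_\delta$ attains its maximum at $\delta=1/2$; for $\kappa>\kappa_{1/2}$ the kernel blows up (Section~\ref{sec:BU}).
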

In \cite{MR2457489} it was shown that if the potential $q(x)$ belongs to the so-called Kato class, $\tilde{p}$ is comparable with the unperturbed heat kernel. This is the case, for example if $q(x) = \kappa x^{-\beta}$ with $\beta \in (0,\alpha)$. In this context, the potential $q(x) = \kappa x^{-\alpha}$ is critical since it does not belong to the Kato class. Consequently, the corresponding perturbed density $\tilde{p}$ is not comparable with the unperturbed density $p_D$, see \eqref{eq:pApprox} and \eqref{eq:pDApprox}. 

	\indent We write $f \approx g$ if $f, g \ge 0$ and $c^{-1}g \le f \le cg$ for some positive number $c$. By $c, c_i, C$ we denote constants whose exact values are unimportant, these constants are determined anew in each statement and proof. As usual we write $a \wedge b := \min(a,b)$ and $a \vee b := \max(a,b).$

\subsection{Historical background}	
	Critical perturbations by Hardy potentials were studied for the first time by Baras and Goldstein \cite{MR742415} in the case ${\alpha = 2}$. They proved that there exists a non-trivial nonnegative solution of the heat equation $\partial_t = \Delta + \kappa |x|^{-2}$ in $\mathbb{R}^d$ for $0 \leq \kappa \leq (d-2)^2/4$ and that such a solution does not exist for larger constant $\kappa$. We would like to point out that in the case $d = 1$, the above equation is considered on the half-line. This operator was also studied by Vazquez and Zuazua \cite{VAZQUEZ2000103} in bounded subsets of $\mathbb{R}^d$ as well as in the entire space. Sharp upper and lower estimates of the heat kernel of the Schr\"odinger operator were obtained by Liskevich and Sobol \cite{Liskevich2003} for $0 < \kappa < (d-2)^2/4,$ and by Milman and Semenov for $\kappa \leq (d-2)^2/4$ see \cite[Theorem 1]{MILMAN2004373} and \cite{MILMAN2005238}. These results were proved by another method in the more general case by G. Metafune, L. Negro and C. Spina in \cite{MR3820411}. The authors considered the operator $L = \Delta + (a-1)\sum_{i,j=1}^d \frac{x_ix_j}{|x|^2} D_{ij} + c\frac{x}{|x|^2} \cdot \nabla - \frac{b}{|x|^2}$ and proved two sided sharp estimates of the heat kernel. In particular, they give the exact formula for the heat kernel of the operator $Lu= u_{xx}  + \frac{b}{x^2}u$ on the half-line
\begin{align*}
\tilde{g}(t,x,y) = \frac{1}{2t}\sqrt{xy} I_\nu\Big(\frac{xy}{2t}\Big) \exp\left[-\frac{x^2+y^2}{4t}\right], \qquad t,x,y>0.
\end{align*}	
Here, $\nu = \sqrt{1/4 - b}$, $b\le1/4$ and $I_\nu$ is the modified Bessel function of the first kind. By using the estimates of $I_\nu$ they get
\begin{align*}
\tilde{g}(t,x,y) &\le C(\epsilon) \frac{1}{t}\left( 1 \wedge \frac{x}{t^{1/2}} \right)^{1/2 + \nu} \left( 1 \wedge \frac{y}{t^{1/2}} \right)^{1/2 - \nu} \exp\left[-(1-\epsilon)\frac{|x-y|^2}{4t}\right],\\
\tilde{g}(t,x,y) &\ge C\frac{1}{t}\left( 1 \wedge \frac{x}{t^{1/2}} \right)^{1/2 + \nu} \left( 1 \wedge \frac{y}{t^{1/2}} \right)^{1/2 - \nu} \exp\left[-\frac{|x-y|^2}{4t}\right],
\end{align*}
where $C(\epsilon) \to\infty$ when $\epsilon\to0$. In this context, Theorem \ref{thm:main} may be treated as a fractional counterpart of the estimates above.

	Perturbations by Hardy potentials were considered also for non-local operators. In  \cite{MR3479207, MR3492734} it was shown that the heat kernel of the operator $L \coloneqq \Delta^{\alpha/2} + \kappa |x|^{-\alpha}$ in $\R^d$ does not exist when the constant $\kappa > \kappa^{*} \coloneqq \frac{2^{\alpha}\Gamma((d + \alpha)/4)^2}{\Gamma((d-\alpha)/4)^2)}$ and $\alpha \in (0, d \wedge 2).$ For $\kappa \in (-\infty,\kappa^*]$ the following sharp estimates for the heat kernel $\tp(t, x, y)$ of $L$ were obtained by Bogdan et al. \cite{MR3933622}, Jakubowski and Wang \cite{MR4140086}, Cho et al. \cite{MR4163128},
	\begin{equation} \label{eq:tpr}
		\tp(t,x,y) \approx \left( 1 \land \frac{|x|}{t^{1/\alpha}} \right)^{-\delta} \left( 1 \land \frac{|y|}{t^{1/\alpha}} \right)^{-\delta}  \left( t^{-d/\alpha} \wedge \frac{t}{|x-y|^{d+ \alpha}}\right), \qquad t > 0,\, x,y \in \mathbb{R}^d\setminus \{0\}.
	\end{equation} 
Here, $\delta \in (-\alpha, (d-\alpha)/2]$ is related to $\kappa$ by the formula 
	\begin{equation*}
	\kappa =	\kappa_{\mathbb{R}^d} = \frac{2^{\alpha}\Gamma \left( \frac{\delta + \alpha}{2} \right) \Gamma \left( \frac{d - \delta}{2} \right) }{ \Gamma \left( \frac{\delta}{2} \right) \Gamma \left( \frac{d - \delta - \alpha}{2} \right) }.
	\end{equation*}	
The estimates \eqref{eq:tpr} were a key ingredient in the analysis of Sobolev norms by Frank et al. \cite{MR4206613}, Merz \cite{MR4311597}, and Bui and D'Ancona \cite{MR4521941}. It is also worth to mention the recent papers \cite{jakubowski2023relativistic} and \cite{TJ-KK-KS-2023}, where the heat kernels of the relativistic operators with Hardy potentials were considered.

	\subsection{Comments}
The topic of the paper is closely related to theory of Hardy inequalities. The construction of the potential $q(x)=\kappa_\delta x^{-\alpha}$ in \eqref{eq:SchrOp} is based on the method proposed in the paper \cite{MR3460023} (see Section \ref{sec:3} for some more details).  Such construction permits to get Hardy-type identity called also the ground state representation (see, e.g. \cite{MR3460023}, \cite{jakubowski2022groundstate}, \cite{bogdan2023ground}). It turns out that the maximal value of $\kappa_\delta$ is the best constant in the corresponding Hardy inequality (see \cite{MR3460023}, \cite{jakubowski2022groundstate}, \cite{bogdan2023ground}). We refer also for other results in this direction \cite{MR2663757}, \cite{FRANK20083407}, \cite{MR2425175}, \cite{KijaczkoLenczewska}.

We note that if $\kappa$ in \eqref{eq:SchrOp} is negative, the estimates of $\tp$ are already known. Namely,
\begin{align}\label{eq:mainThmnegat}
			\tilde{p}(t,x,y) \approx  \left( 1 \wedge \frac{x}{t^{1/\alpha}} \right)^{\alpha/2 + \gamma} \left( 1 \wedge \frac{y}{t^{1/\alpha}} \right)^{\alpha/2 + \gamma}  \left( t^{-1/\alpha}\wedge \frac{t}{|x-y|^{1+\alpha}} \right), \quad x,y \in D,\ t>0.
		\end{align}
where $\gamma>0$ is a constant depending on $\kappa$. Inequality \eqref{eq:mainThmnegat} is a special case of the main result of \cite{MR4163128} proved for a wide class of domains $D \subset \R^d$. In particular, the estimates of the form \eqref{eq:mainThmnegat} holds if $D$ is a half-space. In \cite{MR4634216} and \cite{bui2023equivalence}, T.A. Bui, R. Frank and K. Merz used these estimates to derive some inequalities involving the Sobolev norms of the operators $\Delta^{\alpha/2} + \kappa x_d^{-\alpha}$ with $\kappa<0$ in the half-space $\{x=(x_1,\ldots,x_d) \colon x_d>0\}$. According to the remarks made by the authors of these papers, by Theorem \ref{thm:main}, for $d=1$ their results extend to the case $\kappa \in (0, \Gamma(\tfrac{\alpha+1}{2})^2/\pi]$. 
	
In \cite{MR3000465}, the authors presented a quite general method, which allows to estimate heat kernel of the operators perturbed by potentials from the Kato class.  However, the analysis of the heat kernel corresponding to the operator with the so-called critical potential requires a different approach. In this paper, we use the methods developed in \cite{MR3933622}, nevertheless we make some adjustment and improvements. The Theorem \ref{thm:tpInt}, which is crucial in our consideration is proved in slightly simpler method. Additionally, in Lemma \ref{lemma:Inttp(z_-beta)Approx}  we correct a minor error in the formulation and the proof of \cite[Lemma 4.5]{MR3933622}. The other adjustments are due to the fact that the heat kernel $p_D(t,x,y)$ of $\Delta_D^{\alpha/2}$ is subprobabilistic. To deal with this problem, we use the Doobs conditioning and consider the weighted density $p_D^{(w)}(t,x,y)$ = $x^{-\alpha/2}p_D(t,x,y)y^{\alpha/2}$, however, the lack of symmetry of the kernel $p_D^{(w)}$ leads to other difficulties.

Another issue is the range of the parameters $\alpha$ and $\delta$ in the estimates \eqref{eq:mainThmEst}. In \cite{MR3933622}, the operator $L$ was considered in $\R^d$ and it was assumed that $\alpha< d \land 2$ and the critical value $\kappa$ was attained for $\delta= (d-\alpha)/2$. In our case we allow $\alpha \in (0,2)$ and $\delta \in (0,1/2]$, which resembles the case $\alpha=2$, where $\kappa_\delta = \delta(1-\delta)$ attains its maximum at $1/2$.

The article is organized as follows. Section 2 is divided into two parts. In Subsection 2.1, we introduce the concept of a killed process and give some properties of its transistion density $p_D(t,x,y)$. In Subsection 2.2, by the use of the perturbations series, we construct the density $\tp(t,x,y)$ of a perturbed semigroup. Then, we describe its most important properties from our point of view. In Section 3 we analize auxiliary integrals, e.g. those of the form $\int_D \tp(t,x,y) y^{\alpha/2- \beta} dy$. The main results of this section are Theorem \ref{thm:tpInt} and Lemma \ref{lemma:tpIntApprox}. Finally, in Chapter 4 we provide lower and upper estimates of $\tp(t,x,y)$.   
	

\section{Preliminaries}\label{sec:2}
Below, we present some basic	facts about the densities of isotropic $\alpha$-stable processes killed on exiting the half-line $(0,\infty)$. Let 
	\begin{equation*}
			\nu(y) = \frac{\alpha \Gamma(\alpha)\sin\big(\pi\frac{\alpha}{2}\big)}{\pi}  |y|^{-1-\alpha}.
	\end{equation*}
	The fractional Laplacian for (smooth compactly supported) test functions  $\phi \in C_c^\infty(\R)$ is
	\begin{align*}
		\Delta^{\alpha/2} \phi(x) = \lim_{\varepsilon\downarrow 0} \int_{B(0,\varepsilon)^c}(\phi(x+y)-\phi(x)) \nu(y) dy, \quad x \in \R.
	\end{align*}
We consider the convolution semigroup of functions
\begin{align*}
p_t(x) = \frac{1}{2\pi} \int_{\R} e^{ixz} e^{-t|z|^\alpha} dz, \qquad t>0, x \in \R.
\end{align*}
	It is well known (see e.g. \cite{MR2569321}) that  $p(t,x,y)= p_t(x-y)$ is a transition density function of a rotationally symmetric $\alpha$-stable L\'evy process $(X_t)_{t \ge 0}$ whose generator is the fractional Laplacian $\Delta^{\alpha/2}$. For $p(t,x,y)$ the following approximation holds (\cite{MR2569321}) 
	\begin{equation}\label{eq:pApprox}
		p(t,x,y) \approx t^{-1/\alpha} \wedge \frac{t}{|x-y|^{1 + \alpha} }, \quad t >0,~x,y \in \R.
	\end{equation}
	Moreover $p(t,x,y)$ enjoys the scaling property
	\begin{equation*}\label{eq:pScal}
		p(t,x,y) = t^{-1/\alpha}p(1, t^{-1/\alpha}x, t^{-1/\alpha}y), \quad t > 0,~x,y \in \R,
	\end{equation*}
	and is symmetric in the sense that $p(t,x,y) = p(t,y,x)$ for $t > 0$ and $x,y \in \R.$  Clearly, $p(t,x,y)$ satisfies Chapman-Kolmogorov equation 
	\begin{equation*}\label{eq:pCK}
		p(t+s,x,y) = \int_{\mathbb{\R}} p(t,x,z)p(s,z,y)dz, \quad s, t > 0,~x,y \in \R.
	\end{equation*}
	
	\subsection{Killed process} We define the time of the first exit of the process $X_t$ from $D$ by 
	\begin{equation*}
		\tau_D = \inf\{ t \ge 0\colon X_t \in D^c \}.
	\end{equation*}
	The semigroup generated by the process $X_t$ killed on exiting $D$ is determined by transition densities $p_D(t,x,y)$ given by the Hunt formula (see e.g. \cite{MR1329992})
	\begin{equation*} \label{eq:pD}
		p_D(t,x,y) = p(t,x,y) - \mathbb{E}^x[ p(t-\tau_D, X_{  \tau_D}, y) \mathbf{1}_{ \{ \tau_D < t \} } ],\qquad t>0,~x,y\in D.
	\end{equation*}
	The kernel $p_D$ is symmetric in the sense that $p_D(t,x,y) = p_D(t,y,x)$, satisfies scaling property and Chapman-Kolmogorov equation \cite{MR2569321}
	\begin{align*} 
		p_D(t,x,y) &= t^{-1/\alpha} p_D(1, t^{-1/\alpha}x, t^{-1/\alpha}y), \quad t > 0,~x,y \in D, \\ 
		p_D(t+s,x,y) &= \int_{D} p_D(t,x,z)p_D(s,z,y)dz, \quad s, t > 0,~x,y \in D. 
	\end{align*}
The function $p_D$ is the heat kernel of the fractional Laplacian with the Dirichlet condition on $D$ 
	\begin{align*}
		\Delta^{\alpha/2}_D \phi(x) = \lim_{\varepsilon\downarrow 0} \int_{B(0,\varepsilon)^c}(\phi(x+y)-\phi(x)) \nu(y) dy, \quad x \in \R, \; \phi \in C_c^\infty(D).
	\end{align*}
It is known that (see. e.g. \cite{MR2722789})
	\begin{equation} \label{eq:pDApprox}
		p_D(t,x,y) \approx \bigg(1 \wedge \frac{x^{\alpha/2}}{t^{1/2}} \bigg) \bigg(1 \wedge \frac{y^{\alpha/2}}{t^{1/2}} \bigg) p(t,x,y), \quad t >0,~x,y \in D. 
	\end{equation}
In what follows we will repeatedly multiply $p_D$ by $y^{\alpha/2}$. For that reason, we introduce weighted transition density $\pw$:
	\begin{equation*}\label{eq:pw}
		\pw(t,x,y) = \frac{p_D(t,x,y)}{x^{\alpha/2}}y^{\alpha/2}, \qquad t>0,~x,y \in D.
	\end{equation*}
	Due to \eqref{eq:pwIntegral} $\pw$ is probabilistic with respect to the Lebesgue measure. 
	It can be easily checked that  $\pw$ preserves all properties mentioned for $p_D$ except the symmetry. 	Nevertheless, we have
	\begin{equation} \label{eq:pwQuSym}
		\pw(t,x,y) =\pw(t,y,x)  \frac{y^{\alpha}}{x^{\alpha}}, \qquad t>0,~x,y \in D.
	\end{equation}
 	
Throughout the paper, we will frequently refer to the following inequality, which is a direct consequence of \eqref{eq:pDApprox}:
	\begin{equation} \label{eq:pwUsefullApprox}
		\pw(t,x,y) \leq c\,\frac{t}{x^{\alpha/2} y^{1 + \alpha/2}} \bigg(1 \wedge \frac{x^{\alpha/2}}{t^{1/2}} \bigg) \bigg(1 \wedge \frac{y^{\alpha/2}}{t^{1/2}} \bigg), \qquad t >0,~y > 2x. 
	\end{equation}
	
\subsection{Schr{\"o}diger perturbations by Hardy potential}\label{sec:3}		
In this section we define the function $\tilde{p}$, the density of the semigroup generated by the operator  $\Delta_D^{\alpha/2} + \kappa x^{-\alpha}$. In the construction we follow the ideas and results from \cite{MR3460023} and \cite{jakubowski2022groundstate}.	For $\beta \in (0,1)$ and $\gamma \in (\beta + \alpha/2, 1 + \alpha/2)$, let
	\begin{equation*}
	f(t) = \begin{cases}
	C t^{ (-\alpha/2 - \beta + \gamma)/\alpha },\qquad &\text{for}~t>0\\
	0,\qquad &\text{for}~t\leq0.
	\end{cases}
	\end{equation*}	
	Here positive constant $C$ is chosen so that \cite[Section~3]{jakubowski2022groundstate} 
	\begin{equation} \label{def:h}
		h_{\beta}(x) := \int_{0}^{\infty} \int_{D} \pw(t,x,y)f(t)y^{-\gamma - \alpha/2} dydt = x^{- \beta}, \qquad x \in D.
	\end{equation}
	We define
	\begin{equation} \label{def:q}
		q_{\beta}(x) := \frac{1}{h_{\beta}(x)} \int_{0}^{\infty} \int_{D} 	\pw(t,x,y)f'(t)y^{-\gamma - \alpha/2} dydt = \kappa_{\beta}  x^{-\alpha},\qquad x \in D,
	\end{equation}
	where (see \cite{jakubowski2022groundstate}*{Theorem 1.1})
	\begin{align}\label{eq:kappabeta}
		\kappa_\beta = \frac{\Gamma(\beta+\alpha/2)\Gamma(1-\beta+\alpha/2)}{\Gamma(\beta)\Gamma(1-\beta)}.
	\end{align}
The function $\beta \to \kappa_{ \beta }$ is increasing on $\beta \in (0, 1/2)$ and decreasing on $\beta \in (1/2, 1)$ (see. \cite{jakubowski2022groundstate}*{Section 3}). This fact will be important in our integral analysis.	
The following Lemma will be crucial in the proof of  Theorem \ref{thm:tpInt}.
	\begin{lemma} \label{lemma:pwIdentity}
		For $t>0$ and all $x \in D$,
		\begin{equation*} \label{eq:pwIdentity}
			\kappa_{ \beta } \int_{ 0 }^{ t } \int_{ D } \pw(s,x,y) y^{ -\alpha - \beta } dy ds = x^{ - \beta  }  - \int_{ D } \pw(t,x,y)y^{ - \beta } dy.
		\end{equation*}
	\end{lemma}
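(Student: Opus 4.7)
\medskip

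\emph{Proof plan.} I would regard both sides as functions of $t$, match them at $t=0$, and verify the $t$-derivatives agree. Set
\[
H(t,x) \defeq \int_D \pw(t,x,y)\, y^{-\beta}\, dy,
\]
so that the identity to prove reads
\[
H(t,x) = x^{-\beta} - \kappa_\beta \int_0^t \int_D \pw(s,x,y)\, y^{-\alpha-\beta}\, dy\, ds.
\]
Since the right-hand side equals $x^{-\beta}$ at $t=0$, it suffices to check the initial condition $H(0+,x) = x^{-\beta}$ together with
\[
\partial_t H(t,x) = -\kappa_\beta \int_D \pw(t,x,y)\, y^{-\alpha-\beta}\, dy,
\]
and then integrate in $t$.

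The key move is to substitute the representation \eqref{def:h} of $y^{-\beta}=h_\beta(y)$ inside the definition of $H$. Writing $y^{-\beta} = \int_0^\infty \int_D \pw(u,y,z) f(u) z^{-\gamma-\alpha/2}\, dz\, du$, using Fubini and the Chapman-Kolmogorov identity for $\pw$ (which it inherits from $p_D$), and substituting $v=t+u$ gives
\[
H(t,x) = \int_t^\infty f(v-t) \int_D \pw(v,x,z)\, z^{-\gamma-\alpha/2}\, dz\, dv.
\]
Letting $t\downarrow 0$ and comparing with \eqref{def:h} recovers $H(0+,x)=h_\beta(x)=x^{-\beta}$.

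Differentiating in $t$, the boundary contribution from the lower limit vanishes because $f(0)=0$ (the exponent $(-\alpha/2-\beta+\gamma)/\alpha$ is strictly positive thanks to the standing condition $\gamma\in(\beta+\alpha/2, 1+\alpha/2)$). Hence
\[
\partial_t H(t,x) = -\int_0^\infty f'(u) \int_D \pw(t+u,x,z)\, z^{-\gamma-\alpha/2}\, dz\, du.
\]
A second application of Chapman-Kolmogorov, writing $\pw(t+u,x,z)=\int_D \pw(t,x,y)\pw(u,y,z)\,dy$, swapping the order of integration, and recognizing the inner integral as $q_\beta(y)h_\beta(y)$ by \eqref{def:q} yields
\[
\partial_t H(t,x) = -\int_D \pw(t,x,y)\, q_\beta(y)h_\beta(y)\, dy = -\kappa_\beta \int_D \pw(t,x,y)\, y^{-\alpha-\beta}\, dy,
\]
which is the desired ODE; integrating on $[0,t]$ closes the argument.

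The main obstacle is not conceptual but technical: justifying the two Fubini swaps, the differentiation under the integral sign, and the identification of the $t\downarrow 0$ limit, since $y^{-\beta}$ is singular at the boundary and the kernel $\pw$ must be handled at both $y=0$ and $y=\infty$. The constraints $\beta\in(0,1)$ and $\gamma\in(\beta+\alpha/2,1+\alpha/2)$ in the construction of $f$ are chosen precisely so that, combined with the pointwise bounds \eqref{eq:pDApprox}--\eqref{eq:pwUsefullApprox} on $\pw$, every integral encountered is absolutely convergent; the finiteness of $h_\beta(x)=x^{-\beta}$ and of $q_\beta(x)h_\beta(x)=\kappa_\beta x^{-\alpha-\beta}$ supplies the dominating functions needed for all the limiting operations.
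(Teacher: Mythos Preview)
Your argument is correct and uses exactly the same ingredients as the paper's proof: the representations \eqref{def:h} and \eqref{def:q} of $y^{-\beta}$ and $\kappa_\beta y^{-\alpha-\beta}$ in terms of $f$ and $f'$, combined with Chapman--Kolmogorov for $\pw$. The only organizational difference is that the paper works directly with integrals---starting from the left-hand side, inserting the $f'$-representation, applying Chapman--Kolmogorov, and then a Fubini/substitution that converts $\int_0^t(\cdots)f'(r)\,ds$ into a difference of $f$-integrals---whereas you derive a closed form for $H(t,x)$, differentiate in $t$, and integrate back; the paper's route thus sidesteps the need to justify differentiation under the integral sign, but the two computations are otherwise the same.
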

	\begin{proof}
		We fix $t>0, x>0$ and $\beta \in (0,1)$. By \eqref{def:q}, \eqref{def:h} and Chapman-Kolmogorov equation:
		\begin{align*}
			&\kappa_{ \beta } \int_{ 0 }^{ t } \int_{ D } \pw(s,x,y) y^{ -\alpha - \beta } dy ds \\ 
			&=  \int_{ 0 }^{ t } \int_{ D } \pw(s,x,y) \int_{0}^{\infty} \int_{D} \pw(r,y,z)f'(r)z^{-\gamma - \alpha/2} dz dr dy ds \\ 
			&=  \int_{ 0 }^{ t } \int_{0}^{\infty} \int_{ D } \pw(s + r,x,z) f'(r)z^{-\gamma - \alpha/2} dz dr ds \\
			&=   \int_{0}^{\infty} \int_{ D } [\pw(r,x,z)  - \pw(t + r,x,z)] f(r)z^{-\gamma - \alpha/2} dz dr \\&= x^{ - \beta} - \int_{0}^{\infty} \int_{ D }  \pw(t + r,x,z) f(r)z^{-\gamma - \alpha/2 } dz dr \\
			&= x^{ - \beta} - \int_{0}^{\infty} \int_{ D } \int_{ D } \pw(t,x,y) \pw(r,y,z) f(r)z^{-\gamma - \alpha/2} dy dz dr \\
			&= x^{ - \beta} -  \int_{ D } \pw(t,x,y) y^{ - \beta} dy.
		\end{align*}
	\end{proof}
\noindent Notice that by taking $\beta \rightarrow 0$ in Lemma \ref{lemma:pwIdentity}, we obtain
	\begin{equation} \label{eq:pwIntegral}
		\int_D \pw(t,x,y) dy = 1,\qquad t>0,~x\in D.
	\end{equation}

In the remaining part of the article, we will fix $\delta \in [0, 1/2)$, and put $\kappa=\kappa_\delta$. For the sake of simplicity, let
	\begin{align*}
			q(x) = q_\delta(x)=\kappa x^{-\alpha}.
	\end{align*}
	For $t>0$ and $x, y \in D$ we let $p_0(t,x,y) = p_D(t,x,y)$ and
	\begin{align*} \label{eq:pn1}
		p_n(t,x, y) &= \int_0^t \int_{D} p_D(s,x, z) q(z) p_{n-1}(t-s, z, y)  {\rm d}z  {\rm d}s 
		, \quad n \geq 1.
	\end{align*}  
	We will consider the following Schr\"odinger perturbation of $p_D$ by $q$ 
	\begin{equation*} \label{eq:pwSchPert}
		\tp(t,x,y) = \sum_{n = 0}^{\infty} p_n(t,x,y), \qquad t>0,\; x,y\in D. 
	\end{equation*}
	The above sum is called a perturbation series.
	Obviously $\tp \ge p_D$.  We note that $\tp$ satisfies Duhamel's formula (see e.g. \cite{MR2457489})
	\begin{align}
		\tp(t,x,y) 
		&= p_D(t,x,y) + \int_0^t \int_{D} \tp(s,x,z) q(z) p_D(t-s,z,y) dz ds, \qquad t>0,\; x,y\in D. \label{eq:Df2}
	\end{align}
By considering 	$\pw$ instead of $p_D$, by the same approach (i.e. we use perturbation series), we define weighted kernel $\tpw$, which also satisfies Duhamel's formula
	\begin{align*}
		\tpw(t,x,y) 
		&= \pw(t,x,y) + \int_0^t \int_{D} \tpw(s,x,z) q(z) \pw(t-s,z,y) dz ds. \label{eq:tpwDf2}
	\end{align*}
Analogously to $\pw$, we have
\begin{equation} \label{eq:tpwQuSym}
	\tpw(t,x,y) =\tpw(t,y,x)  \frac{y^{\alpha}}{x^{\alpha}}, \qquad t>0,~x,y \in D.
\end{equation}
By \cite[Theorem~1]{MR3460023}
\begin{equation*}
	\int_{D} \tp(t,x, y)y^{\alpha/2 - \delta}{\rm d}y \leq x^{\alpha/2 - \delta}, \qquad t > 0, x \in D.
\end{equation*}
Hence, $\tp(t,x,y)$ if finite for all $t >0, x >0$ and $a.e.~y \in D.$ 
As a consequence we get
\begin{equation} \label{lemma:superMedian}
	\int_{D} \tpw(t,x, y)y^{-\delta}{\rm d}y \leq x^{-\delta}, \qquad t > 0, x \in D.
\end{equation}

The kernels $\tp$, $\tpw$ has the scaling property (see e.g. \cite[Lemma~2.2]{ MR3933622})
\begin{align*} 
	\begin{cases}
	\tp(t,x, y) = t^{-\frac{1}{\alpha }}\tp \big(1, xt^{-\frac{1}{\alpha }}, yt^{-\frac{1}{\alpha }}\big), \\
	\tpw(t,x, y) = t^{-\frac{1}{\alpha }}\tpw\big(1, xt^{-\frac{1}{\alpha }}, yt^{-\frac{1}{\alpha }}\big),
	\end{cases} t>0,\;~x,y \in D,
\end{align*}
and satisfy Chapman-Kolmogorov equation \cite{MR2457489}:
\begin{align} \label{eq:tpCK}
	\begin{cases}
		\tp(s+t, x, y) = \int_D \tp(s, x, z) \tp(t, z, y) dz, \\
		\tpw(s+t, x, y) = \int_D \tpw(s, x, z) \tpw(t, z, y) dz,
	\end{cases} t,s>0,\;~x,y \in D.
\end{align}

\section{Invariant functions} \label{sec:4}
Equations \eqref{eq:tpInt}, \eqref{eq:tpInvrFun} below play a crucial role in proving estimates \eqref{eq:mainThmEst} (see. \cite[Theorem 3.1]{MR3933622}). The general concept of the proof is based on \cite{MR3933622}, however, the proof of Theorem \ref{thm:tpInt} follows a slightly different approach. We use also the idea presented in  \cite{jakubowski2023relativistic}.
\begin{theorem} \label{thm:tpInt}
	Let $\delta \in (0,1/2]$ and $\beta \in (0,1-\delta) \setminus \{ \delta \}.$ For $t>0$, $x \in D$, 
	\begin{equation} \label{eq:tpInt}
		\int_{ D } \tpw(t,x,y)y^{  - \beta } dy = x^{ - \beta} + (\kappa - \kappa_{ \beta } ) \int_{0}^{t} \int_{ D } \tpw(s,x,y) y^{ -\alpha - \beta }dyds.
	\end{equation}
	Moreover 
	\begin{equation} \label{eq:tpInvrFun}
		\int_D \tpw(t,x,y)y^{- \delta}dy = x^{ - \delta}, \qquad x \in D,~t>0. 
	\end{equation}
\end{theorem}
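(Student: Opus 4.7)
The plan is to prove \eqref{eq:tpInt} by a direct computation that starts from Duhamel's formula for $\tpw$, and then to deduce \eqref{eq:tpInvrFun} by combining \eqref{eq:tpInt} with the super-median bound \eqref{lemma:superMedian} via a limit $\beta\to\delta^-$.

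For \eqref{eq:tpInt}, I would multiply the Duhamel identity
\[
\tpw(t,x,y) = \pw(t,x,y) + \int_0^t \int_D \tpw(s,x,z)\, q(z)\, \pw(t-s,z,y)\, dz\, ds
\]
by $y^{-\beta}$ and integrate over $y\in D$, using Fubini to place the $y$-integration innermost. The unperturbed term $\int_D \pw(t,x,y)\, y^{-\beta}\,dy$ is handled directly by Lemma \ref{lemma:pwIdentity}. Applying the same lemma to the inner integral $\int_D \pw(t-s,z,y)\, y^{-\beta}\,dy$ splits the perturbative term into two contributions: using $q(z)\, z^{-\beta}=\kappa\, z^{-\alpha-\beta}$ one piece becomes $\kappa\int_0^t\!\int_D \tpw(s,x,z)\, z^{-\alpha-\beta}\,dz\,ds$; the second piece, after the substitution $u=s+r$ that swaps the order of the inner $r$- and outer $s$-integrals, fits exactly the Duhamel pattern in the new variable $u$ and collapses to $\int_0^t\!\int_D \big(\tpw(u,x,y)-\pw(u,x,y)\big)\, y^{-\alpha-\beta}\,dy\,du$. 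Summing the three contributions, the $\pw$-pieces cancel and the surviving $\tpw$-pieces combine with coefficient $\kappa-\kappa_\beta$, giving precisely \eqref{eq:tpInt}.

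For \eqref{eq:tpInvrFun}, the bound $\int_D \tpw(t,x,y)\, y^{-\delta}\,dy \leq x^{-\delta}$ is \eqref{lemma:superMedian}. For the reverse, I apply \eqref{eq:tpInt} with $\beta\in(0,\delta)$: since $\beta\mapsto\kappa_\beta$ is increasing on $(0,1/2)$ and $\delta\leq 1/2$, one has $\kappa-\kappa_\beta>0$, so the correction term is non-negative and $\int_D \tpw(t,x,y)\, y^{-\beta}\,dy\geq x^{-\beta}$. Letting $\beta\nearrow\delta$, monotone convergence on $(0,1]$ (where $y^{-\beta}\nearrow y^{-\delta}$) and dominated convergence on $[1,\infty)$ (with dominator $y^{-\beta_0}$ for a fixed $\beta_0\in(0,\delta)$, integrable against $\tpw(t,x,\cdot)$ by \eqref{eq:tpInt} applied at $\beta_0$) together give $\int_D \tpw(t,x,y)\, y^{-\delta}\,dy \geq x^{-\delta}$; equality follows.

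The main obstacle, I expect, is the careful bookkeeping in the Duhamel computation for \eqref{eq:tpInt}, and in particular justifying each Fubini exchange. These rest on the a priori finiteness of $\int_0^t\!\int_D \tpw(s,x,y)\, y^{-\alpha-\beta}\,dy\,ds$ for the relevant $\beta$, which can be obtained termwise from the perturbation series $\tpw=\sum_n \pw_n$ together with iterated application of Lemma \ref{lemma:pwIdentity} to each $\pw_n$.
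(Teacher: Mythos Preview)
Your Duhamel computation for \eqref{eq:tpInt} and your limit argument for \eqref{eq:tpInvrFun} are both correct and match the paper's approach almost exactly; the paper organises the same calculation by computing the two sides of \eqref{eq:tpIntpos} separately and then subtracting, while you fold the subtraction into a single chain, but the content is identical.

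The genuine gap is in your finiteness justification. Iterating Lemma~\ref{lemma:pwIdentity} on the terms $p_n^{(w)}$ of the perturbation series gives
\[
\int_0^t\!\int_D p_n^{(w)}(s,x,y)\,y^{-\alpha-\beta}\,dy\,ds \le \Big(\frac{\kappa}{\kappa_\beta}\Big)^{\!n}\frac{x^{-\beta}}{\kappa_\beta},
\]
and this geometric series converges only when $\kappa<\kappa_\beta$, i.e.\ when $\beta\in(\delta,1-\delta)$. For $\beta\in(0,\delta)$ the ratio $\kappa/\kappa_\beta$ exceeds $1$ and the termwise bound is useless; yet this is precisely the range you need for the lower bound in \eqref{eq:tpInvrFun}. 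Worse, when $\delta=1/2$ the interval $(\delta,1-\delta)$ is empty, so the termwise argument gives nothing at all.

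The paper fills this gap with additional ingredients you do not mention. For $\delta<1/2$ it proves a global mass bound $\int_D\tpw(t,x,y)\,dy\le M(1+t^{\delta/\alpha}x^{-\delta})$ (Lemma~\ref{lemma:tpIntApprox}), obtained from \eqref{lemma:superMedian} via a bootstrap through Lemma~\ref{lemma:tpIntApprox2} and Lemma~\ref{lemma:pwIntLowBound}; combining this with the termwise bound at the single value $\beta=1/2$ then yields finiteness for all $\beta\in(0,\delta]$ (Lemma~\ref{lem:iiptconv}). For $\delta=1/2$ the paper abandons the direct computation entirely and instead realises $\tpw$ as a perturbation of $\tpw_\beta$ (the kernel with coupling $\kappa_\beta$, for which \eqref{eq:tpInvrFun} is already known), deriving \eqref{eq:tpInt} from Duhamel's formula for that pair; here Tonelli suffices since all integrands are non-negative.
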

Theorem \ref{thm:tpInt} states that the function $x^{ - \delta}$ is invariant for $\tpw$. Since $\kappa_{\beta}$ is increasing on (0,1/2) and  decreasing on (1/2,1), by \eqref{eq:tpInt} the function $x^{ - \beta}$ is subharmonic when $0 < \beta \leq \delta$ and superharmonic if $\beta \in (\delta, 1 - \delta)$.  We start by proving a few auxiliary lemmas.

\begin{lemma} \label{lemma:pwIntLowBound}
	Let $R > 1$. There is a constant $c_0$ such that for $s \in (0,1)$ and $z < R$, 
	\begin{equation*} \label{eq:pwIntLowBound}
		\int_{ 0 }^{ 2R } \pw(s,z,y)dy \geq c_0.
	\end{equation*}
\end{lemma}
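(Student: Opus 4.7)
The plan is to construct, for each pair $(s,z) \in (0,1) \times (0,R)$, a short subinterval of $(0,2R)$ of length comparable to $s^{1/\alpha}$ on which the density $\pw(s,z,\cdot)$ is at least a constant multiple of $s^{-1/\alpha}$; the product of height and length then furnishes the constant $c_0$. I would split into two regimes according to whether $z$ or $s^{1/\alpha}$ is larger.

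In the regime $s^{1/\alpha} \leq z$ (equivalently $s \leq z^{\alpha}$), I would restrict to $y \in (z,\, z+s^{1/\alpha})$. Since $s^{1/\alpha} \leq z < R$, the right endpoint obeys $z+s^{1/\alpha} \leq 2z < 2R$, so this subinterval sits inside $(0,2R)$. Both $y$ and $z$ exceed $s^{1/\alpha}$, so the two survival factors $(1\wedge z^{\alpha/2}/s^{1/2})$ and $(1\wedge y^{\alpha/2}/s^{1/2})$ in \eqref{eq:pDApprox} both equal $1$, while $|y-z| \leq s^{1/\alpha}$ forces $p(s,z,y) \approx s^{-1/\alpha}$ by \eqref{eq:pApprox}; hence $p_D(s,z,y) \gtrsim s^{-1/\alpha}$. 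The conversion factor $y^{\alpha/2}/z^{\alpha/2}$ is at least $1$ (as $y \geq z$), so $\pw(s,z,y) \gtrsim s^{-1/\alpha}$ throughout the interval, and integration yields a constant.

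In the complementary regime $s^{1/\alpha} > z$ (equivalently $s > z^{\alpha}$), I would instead take $y \in (s^{1/\alpha},\, 2s^{1/\alpha})$. Because $s<1<R^{\alpha}$, we have $2s^{1/\alpha} < 2 < 2R$, so this subinterval is again inside $(0,2R)$. There $|y-z| < 2s^{1/\alpha}$, so $p(s,z,y) \approx s^{-1/\alpha}$. Now $(1\wedge y^{\alpha/2}/s^{1/2}) = 1$ while $(1\wedge z^{\alpha/2}/s^{1/2}) = z^{\alpha/2}/s^{1/2}$; combining with \eqref{eq:pDApprox} gives $p_D(s,z,y) \gtrsim z^{\alpha/2}\, s^{-1/2-1/\alpha}$. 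Multiplying by $y^{\alpha/2}/z^{\alpha/2}$ and using $y^{\alpha/2} \geq s^{1/2}$ yields $\pw(s,z,y) \gtrsim s^{-1/\alpha}$ once more, and integration over an interval of length $s^{1/\alpha}$ produces the required constant.

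The argument is essentially bookkeeping once the case-split is made. The only substantive points are verifying that each chosen subinterval fits inside $(0,2R)$ (which uses $z<R$ in the first regime and $s<1<R^\alpha$ in the second) and that each of the min-factors in \eqref{eq:pDApprox} simplifies correctly in the respective regime; I do not foresee any serious analytical obstacle. In fact, this approach produces a $c_0$ that does not even depend on $R$.
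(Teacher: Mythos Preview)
Your proof is correct and follows essentially the same approach as the paper: the same case split according to whether $z$ or $s^{1/\alpha}$ is larger, and in the regime $z \geq s^{1/\alpha}$ your argument is identical to the paper's. The only (inconsequential) difference is that in the regime $z < s^{1/\alpha}$ the paper integrates over $(0,s^{1/\alpha})$ rather than your $(s^{1/\alpha},2s^{1/\alpha})$, computing $\int_0^{s^{1/\alpha}} y^{\alpha}\,s^{-1-1/\alpha}\,dy = (\alpha+1)^{-1}$ directly; both choices work for the same reason.
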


\begin{proof}
	If $z < s^{ \frac{1}{\alpha} }$, then
	\begin{align*}
		\int_{ 0 }^{ 2R } \pw(s,z,y) dy &\geq c \int_{ 0 }^{ s^{1/\alpha} } \bigg( 1 \wedge \frac{ z^{\alpha/2 } }{ s^{1/2} } \bigg) \bigg( 1 \wedge \frac{ y^{\alpha/2 } }{ s^{1/2} } \bigg) \bigg( s^{-1/\alpha} \wedge \frac{ s }{ |z-y|^{1+\alpha} } \bigg) \frac{ y^{\alpha/2} }{ z^{\alpha/2} } dy \\ 
		&= c \int_{ 0 }^{ s^{1/\alpha} } \frac{ z^{\alpha/2 } }{ s^{1/2} } \frac{ y^{\alpha/2 } }{ s^{1/2} }s^{-1/\alpha} \frac{ y^{\alpha/2} }{ z^{\alpha/2} } dy = \frac{c}{\alpha + 1}.
	\end{align*}
	For $z \geq s^{ \frac{1}{\alpha} }$,
	\begin{align*}
		\int_{ 0 }^{ 2R } \pw(s,z,y)dy &\geq 
		c\int_{ z  }^{ z + s^{1/\alpha} } \bigg( s^{-1/\alpha} \wedge \frac{ s }{ |z-y|^{1+\alpha} } \bigg) \frac{ y^{\alpha/2} }{ z^{\alpha/2} } dy \\ 
		&\geq c\int_{ z  }^{ z + s^{1/\alpha} } s^{-1/\alpha} dy = c.
	\end{align*}
\end{proof}


\begin{lemma} \label{lemma:tpIntApprox2}
	For $R > 0$ and $t > 0$,
	\begin{align} \label{eq:tpIntApprox2}
		\begin{split}
			\big( 1 - tq(R) \big)\tpw(t,x,y) &\le \pw(t,x,y) \\ &+  \int_0^t \int_{z < R} \tpw(t-s,x,z) q(z) \pw(s,z,y) dz ds, \qquad x, y \in D.
		\end{split}
	\end{align}
\end{lemma}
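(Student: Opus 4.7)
The plan is to start from Duhamel's formula for $\tpw$, split the inner integral according to whether $z<R$ or $z\ge R$, and absorb the tail part back into the left-hand side by using Chapman--Kolmogorov.

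More precisely, after a change of variables $s \mapsto t-s$, Duhamel's formula for $\tpw$ reads
\begin{align*}
\tpw(t,x,y) = \pw(t,x,y) + \int_0^t \int_D \tpw(t-s,x,z)\, q(z)\, \pw(s,z,y)\, dz\, ds.
\end{align*}
I would split the inner integration over $D$ into $\{z<R\}$ and $\{z\ge R\}$. On the first piece we already have exactly the term appearing on the right-hand side of \eqref{eq:tpIntApprox2}, so it suffices to show that the contribution from $\{z\ge R\}$ is bounded above by $t\,q(R)\,\tpw(t,x,y)$.

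On $\{z \ge R\}$ the potential satisfies $q(z) = \kappa z^{-\alpha} \le \kappa R^{-\alpha} = q(R)$, so that piece is at most
\begin{align*}
q(R)\int_0^t \int_{z\ge R} \tpw(t-s,x,z)\, \pw(s,z,y)\, dz\, ds \le q(R)\int_0^t \int_D \tpw(t-s,x,z)\, \tpw(s,z,y)\, dz\, ds,
\end{align*}
where I used $\pw \le \tpw$, which is immediate from the perturbation series $\tpw = \sum_{n\ge 0} p_n^{(w)}$ with $p_0^{(w)}=\pw$ and all terms non-negative. Now Chapman--Kolmogorov \eqref{eq:tpCK} for $\tpw$ gives $\int_D \tpw(t-s,x,z)\tpw(s,z,y)\,dz = \tpw(t,x,y)$, so the inner double integral is exactly $t\,\tpw(t,x,y)$. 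Combining gives
\begin{align*}
\tpw(t,x,y) \le \pw(t,x,y) + \int_0^t\!\int_{z<R} \tpw(t-s,x,z)\,q(z)\,\pw(s,z,y)\,dz\,ds + t\,q(R)\,\tpw(t,x,y),
\end{align*}
and rearranging yields \eqref{eq:tpIntApprox2}.

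There is no real obstacle here beyond making sure the variables line up: the only subtlety is the symmetry between the $(s,t-s)$ forms of Duhamel's formula (handled by a change of variables), and the use of $\pw \le \tpw$ to pass from the mixed convolution to a genuine Chapman--Kolmogorov identity for $\tpw$.
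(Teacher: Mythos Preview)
Your proof is correct and follows essentially the same route as the paper: split Duhamel's formula at $z=R$, bound $q(z)\le q(R)$ on $\{z\ge R\}$, replace $\pw$ by $\tpw$, and apply Chapman--Kolmogorov for $\tpw$ to absorb that piece as $t\,q(R)\,\tpw(t,x,y)$. The only cosmetic difference is that you make the change of variables in Duhamel's formula and the inequality $\pw\le\tpw$ explicit, whereas the paper leaves these implicit.
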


\begin{proof}
	We have $q(z) < q(R)$ for $z > R$.  Therefore by Chapman-Kolmogorov equation 
	\begin{align*}
		&\int_0^{t} \int_{z \ge R} \tpw(t-s,x,z) q(z) \pw(s,z,y) dz ds \\ 
		&\le  q(R) \int_0^{t} \int_{D} \tpw(t-s,x,z) \tpw(s,z,y) dz ds \le t q(R) \tpw(t,x,y).
	\end{align*}
	Hence Duhamel's formula yields the result.
\end{proof}


\begin{lemma} \label{lemma:tpIntApprox}
	For $\delta \in (0, 1/2] $ there exist constant $M$ such that for $t \in (0, \infty)$
	\begin{equation*} \label{eq:tpIntApprox}
		\int_D \tpw(t,x,y)dy \leq M (1 + t^{\delta/\alpha}x^{ - \delta }).
	\end{equation*}
\end{lemma}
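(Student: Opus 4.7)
Proof plan. The scaling property of $\tpw$ gives $\int_D \tpw(t,x,y)\,dy = F(t^{-1/\alpha} x)$ with $F(\xi) := \int_D \tpw(1,\xi,y)\,dy$, so it suffices to show $F(\xi) \leq M(1 + \xi^{-\delta})$ for every $\xi > 0$. The plan is a soft self-referential contraction on $F$ built from the supermedian inequality \eqref{lemma:superMedian} together with Lemmas \ref{lemma:pwIntLowBound} and \ref{lemma:tpIntApprox2}.

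Fix $R > 1$ large enough that $q(R) = \kappa R^{-\alpha}$ is strictly smaller than the constant $c_0$ from Lemma \ref{lemma:pwIntLowBound}; such $R$ exists because $q(R) \to 0$ as $R \to \infty$, and inspection of the proof of Lemma \ref{lemma:pwIntLowBound} shows that $c_0$ may be taken independent of $R$. Split $F(\xi)$ at the threshold $y = 2R$. For the near part, the bound $y^{-\delta} \geq (2R)^{-\delta}$ on $\{y \leq 2R\}$ combined with \eqref{lemma:superMedian} yields
\[
\int_{y \leq 2R} \tpw(1,\xi,y)\,dy \;\leq\; (2R)^\delta \int_D \tpw(1,\xi,y)\, y^{-\delta}\,dy \;\leq\; (2R)^\delta \xi^{-\delta}.
\]

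For the tail $\int_{y > 2R} \tpw(1,\xi,y)\,dy$, I would apply Lemma \ref{lemma:tpIntApprox2} at $t = 1$ and integrate over $\{y > 2R\}$. The key input is Lemma \ref{lemma:pwIntLowBound}, which gives $\int_{y > 2R} \pw(s,z,y)\,dy \leq 1 - c_0$ uniformly for $z < R$ and $s \in (0,1)$, whereas $\int_{y > 2R} \pw(1,\xi,y)\,dy \leq 1$ holds trivially. At the same time, integrating the Duhamel formula for $\tpw$ against $dy$ and using \eqref{eq:pwIntegral} gives the identity $\int_0^1 \int_D \tpw(s,\xi,z)\, q(z)\,dz\,ds = F(\xi) - 1$, and restricting the spatial variable to $z < R$ only shrinks the left side. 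Assembling these pieces produces
\[
(1 - q(R)) \int_{y > 2R} \tpw(1,\xi,y)\,dy \;\leq\; c_0 + (1 - c_0) F(\xi).
\]
Adding the near- and far-part estimates and rearranging with the use of $q(R) < c_0$ gives $(c_0 - q(R)) F(\xi) \leq (1 - q(R))(2R)^\delta \xi^{-\delta} + c_0$, hence $F(\xi) \leq A \xi^{-\delta} + B$ for explicit positive constants $A, B$, which is dominated by $M(1 + \xi^{-\delta})$ with $M := \max(A,B)$. The main obstacle is precisely this closure: the integrated Duhamel identity creates a feedback of the form $F \leq \text{data} + (1 - c_0) F / (1 - q(R))$, and the condition $q(R) < c_0$ — purchased by choosing $R$ large — is exactly what makes the resulting self-map strictly contractive so that a closed-form upper bound on $F$ can be extracted.
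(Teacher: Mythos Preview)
Your argument assembles the same three ingredients as the paper---Lemma \ref{lemma:pwIntLowBound}, Lemma \ref{lemma:tpIntApprox2}, and the supermedian bound \eqref{lemma:superMedian}---but the closure step has a genuine gap. Passing from
\[
F(\xi) \le (2R)^\delta \xi^{-\delta} + \frac{c_0 + (1-c_0)F(\xi)}{1-q(R)}
\]
to $(c_0 - q(R))F(\xi) \le (1-q(R))(2R)^\delta \xi^{-\delta} + c_0$ requires subtracting $(1-c_0)F(\xi)/(1-q(R))$ from both sides, which is illegitimate unless $F(\xi) < \infty$ is already known. Nothing preceding this lemma supplies that: the supermedian inequality only controls $\int_D \tpw(1,\xi,y)y^{-\delta}\,dy$, and $y^{-\delta}$ decays at infinity, so the tail of $\int_D \tpw(1,\xi,y)\,dy$ is a priori uncontrolled. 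If $F(\xi)=\infty$ your feedback inequality reads $\infty \le \infty$ and nothing can be extracted, contractive coefficient or not. This can be repaired by running the same inequality on the partial sums of the perturbation series (or on the kernels obtained from truncated potentials $q\mathbf{1}_{(\epsilon,\infty)}$, which are Kato-class) and passing to the limit, but you do not mention any such approximation.

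The paper sidesteps the self-reference entirely, and in fact your own near-part bound is the key. Integrate Lemma \ref{lemma:tpIntApprox2} over all of $D$ (not just $y>2R$) to get $(1-q(R))F(\xi) \le 1 + \int_0^1\int_{z<R}\tpw(s,\xi,z)q(z)\,dz\,ds$. Now bound the correction term directly: insert $1 \le c_0^{-1}\int_0^{2R}\pw(1-s,z,y)\,dy$ from Lemma \ref{lemma:pwIntLowBound}, swap integrals, and recognise via Duhamel that the result is at most $c_0^{-1}\int_0^{2R}\tpw(1,\xi,y)\,dy$, which your near-part estimate already bounds by $c_0^{-1}(2R)^\delta \xi^{-\delta}$. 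This yields $F(\xi) \le 2\big(1 + c_0^{-1}(2R)^\delta \xi^{-\delta}\big)$ with no feedback loop, no a priori finiteness needed, and no need to engineer $q(R)<c_0$ (only $q(R)\le 1/2$).
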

\begin{proof}
	By scaling property of $\tpw$ it suffices to consider $t = 1$. Let $R \geq 2^{1/\alpha}.$ Note that by \eqref{eq:kappabeta}, $\big(1 - \frac{\kappa_{ \delta }}{R^\alpha} \big)^{-1} \leq 2 $  since $R^\alpha \geq 2$ and $\kappa_\delta \le1$.  Hence by Lemma \ref{lemma:tpIntApprox2}
	\begin{equation} \label{eq:approx_tg_y^alpha/2_2}
		\tpw(1,x,y) \leq 2 \Bigg[ \pw(1,x,y)  + \int_{ 0 }^{ 1 } \int_{ 0 }^{ R } \tpw(s,x,z)q(z)\pw(1-s,z,y)dzds \Bigg].
	\end{equation}
	Integrating over $D$ we obtain
	\begin{equation*}
		\int_{ D } \tpw(1,x,y) dy \leq 2 \Bigg[ 1  + \int_{ 0 }^{ 1 } \int_{ 0 }^{ R } \tpw(s,x,z)q(z) dzds \Bigg].
	\end{equation*}
	By Lemma \ref{lemma:pwIntLowBound}, inequality \ref{lemma:superMedian} and Fubini theorem,
	\begin{align*}
		& \int_{ 0 }^{ 1 } \int_{ 0 }^{ R } \tpw(s,x,z)q(z) dzds \\ 
		&\leq \frac{1}{c} \int_{ 0 }^{ 1 } \int_{ 0 }^{ R } \tpw(s,x,z)q(z) \int_{0}^{2R} \pw(1-s,z,y) dydzds \\ 
		&\leq  \frac{1}{c}\int_{0}^{2R}  \tpw(1,x,y) dy \leq \frac{(2R)^{\delta}}{c} \int_{0}^{2R}  \tpw(1,x,y)y^{ - \delta} dy \leq \frac{(2R)^{\delta}}{c} x^{ -\delta}.
	\end{align*}
\end{proof}

\begin{lemma}\label{lem:iiptconv}
	Let $\delta \in (0,1/2)$ and $\beta \in (0,1-\delta)$. Then, for every $t>0$ and $x\in D$
	\begin{align}
		&\int_0^t \int_D \tpw(s,x,y) y^{-\beta - \alpha } dy ds < \infty,  \label{EQ1:estbgd}\\
		&\int_D \tpw(t,x,y) y^{-\beta } dy < \infty. \label{EQ2:estbgd}
	\end{align}
\end{lemma}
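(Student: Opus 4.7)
The plan is to prove \eqref{EQ1:estbgd} first via the weighted perturbation series, and then derive \eqref{EQ2:estbgd} from it by a single use of Duhamel's formula. The workhorse is Lemma \ref{lemma:pwIdentity} applied to $\pw$, which gives the one-sided estimate
\[
\int_0^r \int_D \pw(u,z,y) y^{-\alpha-\beta} \,dy\,du \leq \frac{z^{-\beta}}{\kappa_\beta}, \qquad r>0,~z\in D,~\beta \in (0,1).
\]
All integral swaps below will be justified by Tonelli, since the integrands are non-negative.

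For \eqref{EQ1:estbgd} we expand $\tpw = \sum_{n\ge 0} p_n^{(w)}$, with $p_n^{(w)}$ the weighted analogue of $p_n$, and claim inductively that
\[
A_n := \int_0^t \int_D p_n^{(w)}(s,x,y) y^{-\alpha-\beta} \,dy\,ds \leq \left(\frac{\kappa}{\kappa_\beta}\right)^{n} \frac{x^{-\beta}}{\kappa_\beta}.
\]
The case $n=0$ is the displayed inequality. For the inductive step, writing out the definition of $p_n^{(w)}$, swapping integrals so that $\pw(s-u,z,y)$ is tested against $y^{-\alpha-\beta}\,dy$ first, substituting $v=s-u$, and invoking the displayed inequality gives $A_n \leq (\kappa/\kappa_\beta)\, A_{n-1}$. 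When $\beta \in (\delta, 1-\delta)$, the symmetry $\kappa_\beta = \kappa_{1-\beta}$ combined with the monotonicity of $\beta \mapsto \kappa_\beta$ recalled after \eqref{eq:kappabeta} yields $\kappa_\beta > \kappa_\delta = \kappa$, so $\sum_n A_n$ is a convergent geometric series and \eqref{EQ1:estbgd} follows. For $\beta \in (0,\delta]$ (the interval $(\delta,1-\delta)$ is nonempty because $\delta<1/2$), we fix any $\beta_0 \in (\delta, 1-\delta)$ and split the $y$-integral at $y=1$: on $\{y\leq 1\}$ one has $y^{-\alpha-\beta}\leq y^{-\alpha-\beta_0}$, which reduces matters to the case just handled, while on $\{y>1\}$ one has $y^{-\alpha-\beta}\leq 1$, and Lemma \ref{lemma:tpIntApprox} provides the finite bound $\int_0^t M(1+s^{\delta/\alpha}x^{-\delta})\,ds$.

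For \eqref{EQ2:estbgd}, Duhamel's formula for $\tpw$, Tonelli, and Lemma \ref{lemma:pwIdentity} applied to the inner $\pw$-integral combine to give
\[
\int_D \tpw(t,x,y) y^{-\beta}\,dy \leq x^{-\beta} + \kappa \int_0^t\int_D \tpw(s,x,z) z^{-\alpha-\beta}\,dz\,ds,
\]
whose right-hand side is finite for all $\beta \in (0,1-\delta)$ by \eqref{EQ1:estbgd}. The main obstacle is the subrange $\beta \in (0,\delta]$ of \eqref{EQ1:estbgd}: there $\kappa_\beta \leq \kappa$, so the naive geometric sum argument from the perturbation series diverges; the rescue is the split-at-$y=1$ trick, which leans on the unconditional bound of Lemma \ref{lemma:tpIntApprox}.
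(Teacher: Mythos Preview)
Your proof is correct and follows essentially the same approach as the paper's: the geometric-series bound you establish inductively via the perturbation series is precisely the content of the inequality the paper imports from \cite[Corollary~3.5]{MR3933622} for $\beta\in(\delta,1-\delta)$, and your split-at-$y=1$ argument for $\beta\in(0,\delta]$ is the paper's bound $z^{-\beta-\alpha}\le 1+z^{-1/2-\alpha}$ combined with Lemma~\ref{lemma:tpIntApprox} (the paper simply fixes $\beta_0=1/2$). The derivation of \eqref{EQ2:estbgd} via Duhamel and Lemma~\ref{lemma:pwIdentity} is identical.
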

\begin{proof}
	Following the proof of \cite[Corollary 3.5]{MR3933622}, for $\beta \in (\delta, 1-\delta)$ we have 
	\begin{align}\label{eq:estbgd}
		\int_0^t\int_D \tpw(s,x,z)z^{-\beta-\alpha} dz ds \le \frac{\kappa_\beta}{\kappa_\beta - \kappa} x^{-\beta}.
	\end{align}
	Now let $\beta \in (0,\delta]$. Note that $\beta < 1/2$ and $1/2\in (\delta,1-\delta)$. By Lemma \ref{lemma:tpIntApprox} and \eqref{eq:estbgd},
	\begin{align} \label{eq:pw(-beta-alpha)IntApprox}
		\begin{split}
			\int_0^t\int_D \tpw(s,x,z)z^{-\beta-\alpha} dz ds 
			&\le \int_0^t\int_D \tpw(s,x,z)(1+ z^{-1/2-\alpha})  dz ds \\
			&\le M\int_0^t (1+s^{\delta/\alpha} x^{-\delta}) ds + \frac{\kappa_{1/2}}{\kappa_{1/2} - \kappa} x^{-\beta} <\infty.
		\end{split}
	\end{align}
	Hence, \eqref{EQ1:estbgd} follows by \eqref{eq:estbgd} and \eqref{eq:pw(-beta-alpha)IntApprox}. By Lemma \ref{lemma:pwIdentity} we have 
	\begin{equation} \label{eq:pwy-betaApprox}
		\int_D \pw(t,x,y) y^{-\beta} dy \leq x^{-\beta}.
	\end{equation}
	Therefore, by \eqref{eq:Df2}, \eqref{eq:pwy-betaApprox} and \eqref{EQ1:estbgd}: 
	\begin{align*}
		\int_D \tpw(t,x,y) y^{-\beta } dy \le  \int_D \pw(t,x,y) y^{-\beta } dy + \int_0^t \int_D \tpw(s,x,z) q(z)z^{-\beta} dz ds < \infty, 
	\end{align*}
	which ends the proof of \eqref{EQ2:estbgd}.
\end{proof} 

\begin{proof}[Proof of Theorem \ref{thm:tpInt}]
	Let $t >0$ and $x \in D$. By \eqref{eq:Df2} and Lemma \ref{lemma:pwIdentity}, we have
	\begin{align}
		&\int_D \tpw(t,x,y) y^{-\beta} dy \notag\\
		&=  \int_D \pw(t,x,y) y^{-\beta} dy  + \int_0^t \int_D \tpw(u,x,z) q(z) \left(\int_D \pw(t-u, z, y) y^{- \beta} dy\right) du  dz \label{eq1:thm:tpInt}
	\end{align}
	and 
	\begin{align}
		&\kappa_{ \beta } \int_0^t \int_D \tpw(s,x,y) y^{-\alpha - \beta } dy ds \notag\\
		&= \kappa_{ \beta } \int_0^t \int_D \pw(s,x,y) y^{-\alpha - \beta } dy ds \notag\\ 
		&+  \kappa_{ \beta }  \int_0^t \int_D  \int_0^s \int_D \tpw(u,x,z) q(z) \pw(s-u, z, y) y^{-\alpha - \beta} dz du dy ds \notag\\ 
		&=x^{-\beta} - \int_D \pw(t,x,y) y^{-\beta} dy \notag\\
		&+    \int_0^t \int_D  \tpw(u,x,z) q(z) \left(\kappa_{ \beta }\int_0^{t-u} \int_D  \pw(s, z, y) y^{-\alpha - \beta}  dy ds\right) dz du. \label{eq2:thm:tpInt}
	\end{align}
	We add \eqref{eq1:thm:tpInt} and \eqref{eq2:thm:tpInt} and apply Lemma \ref{lemma:pwIdentity}  to get
	\begin{align}\label{eq:tpIntpos}
		&\int_D \tpw(t,x,y) y^{-\beta} dy  + \kappa_{ \beta } \int_0^t \int_D \tpw(s,x,y) y^{-\alpha - \beta } dy ds \notag\\
		&=  x^{-\beta} + \kappa \int_0^t \int_D \tpw(s,x,y) y^{-\alpha - \beta } dy ds
	\end{align}
	
	Let $\delta \in (0,1/2)$. Then, \eqref{eq:tpInt} follows by \eqref{eq:tpIntpos} and Lemma \ref{lem:iiptconv}. Furthermore, let $\beta \nearrow\delta$. By \eqref{eq:tpInt}, 
\eqref{eq:pw(-beta-alpha)IntApprox} and Lebesgue convergence theorem we get \eqref{eq:tpInvrFun}.
	
	Now, consider $\delta = 1/2$ and let $\beta \in (0,\delta)$. Denote by $\tp_\beta(t,x,y)$ the perturbation of $p_D(t,x,y)$ by $\kappa_\beta x^{-\alpha}$. Put $\tpw_\beta(t,x,y) = x^{-\alpha/2} \tp_\beta(t,x,y)y^{\alpha/2}$. Then, $\tpw$ is the perturbation of $\tpw_\beta$ by $(\kappa - \kappa_\beta)x^{-\alpha}$ (see e.g. \cite{MR2457489}). Hence, by Duhamel formula, Fubini theorem and \eqref{eq:tpInvrFun} applied to $\tpw_\beta$, we get
	\begin{align*}
		(\kappa-\kappa_\beta) \int_0^t \int_D \tpw(s,x,y) y^{-\alpha - \beta } dy ds 
		&= \int_D\int_0^t \int_D \tpw(s,x,y) (\kappa-\kappa_\beta)y^{-\alpha} \tpw_\beta(t-s,y,z) z^{-\beta} dz dy ds \\
		&= \int_D \left(\tpw(t,x,z) - \tpw_\beta(t,x,z)\right) z^{-\beta} dz \\
		&= \int_D \tpw(t,x,z) z^{-\beta} dz - x^{-\beta}, 
	\end{align*}
	which gives \eqref{eq:tpInt}. Next, let $\beta < \delta$. Clearly, $y^{-\beta} < 1+ y^{-\delta}$. By \eqref{lemma:superMedian} and Lemma \ref{lemma:tpIntApprox} we have
\begin{align*}
\int_D \tpw(s,x,y) y^{- \beta } dy \le \int_D \tpw(s,x,y) (1+ y^{-\delta}) dy <\infty.
\end{align*}
By \eqref{eq:tpInt},
\begin{align*}
\int_D \tpw(s,x,y) y^{- \beta } dy \ge x^{-\beta}.
\end{align*}
Hence, taking $\beta \nearrow \delta$, by Lebesgue convergence theorem we get 
\begin{align*}
\int_D \tpw(s,x,y) y^{- \delta } dy \ge x^{-\delta},
\end{align*}
which together with \eqref{lemma:superMedian} gives \eqref{eq:tpInvrFun}.

\end{proof}

\section{Two-Sided Estimates and Joint Continuity of $\tpw(t,x,y)$}
In this section we prove Theorem \ref{thm:main}. Generally we follow the idea of the proof of \cite{MR3933622}*{Theorem 3.1}, however due to the lack of symmetry of $\pw$ we had to make some adjustments. 
We define (see. e.g. \cite{10.1214/18-EJP133}*{Example 3.4})
\begin{equation*}
	\eta_t(y) = \lim_{x \rightarrow 0} \pw(t,x,y), \qquad t>0,~y \in D.
\end{equation*}

By definition of $\pw$ and \eqref{eq:pDApprox} the following estimates for $\eta_t(y)$ holds (see also \cite{10.1214/18-EJP133}): 
\begin{equation} \label{eq:etaApprox}
	\eta_t(y) \approx \frac{y^{\alpha/2}}{t^{1 + 1/\alpha}}   \wedge \frac{t^{1/2}}{ y^{1 + \alpha} }, \qquad t>0,~y \in D.
\end{equation}
For $x \in D$ and $t > 0$ let $H(t,x) = t^{\delta/\alpha}x^{-\delta} + 1$. To simplify we denote $H(x) = H(1, x).$ As a result of equation \eqref{eq:tpInvrFun} and inequality \eqref{eq:tpIntApprox} for $t >0, x \in D$ we get
\begin{equation} \label{eq:tpHIntIneq}
		\int_{D} \tpw(t,x,y) H(t,y)dy \leq (M+1)H(t,x).
\end{equation}	

The following corollary is a consequence of inequalities \eqref{eq:tpHIntIneq} and \eqref{eq:pw(-beta-alpha)IntApprox}.
\begin{corollary} \label{lemma:(tpz-beta)IntApprox}
	If $\beta \in (0, \delta]$, then
	\begin{equation*} \label{eq:(tpz-beta)IntApprox}
		\int_{D} \tpw(1,x,z) z^{-\beta} dz \le CH(x),
	\end{equation*}
	and
	\begin{equation} \label{eq:doubleInttpwApprox}
		\int_{0}^{1} \int_D \tpw(s,x,z) z^{-\alpha - \beta} dz ds \leq cH(x).
	\end{equation}
\end{corollary}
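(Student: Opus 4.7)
The plan is to deduce both bounds from the already-established displayed inequalities \eqref{eq:tpHIntIneq} and \eqref{eq:pw(-beta-alpha)IntApprox}, using a single elementary pointwise comparison: for $\beta \in (0,\delta]$ and any $z>0$, one has $z^{-\beta} \le 1 + z^{-\delta} = H(1,z)$. This holds by splitting into the cases $z\le 1$, where $z^{-\beta}\le z^{-\delta}$, and $z>1$, where $z^{-\beta}\le 1$.

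First I would prove the $\int_D \tpw(1,x,z) z^{-\beta}\,dz$ bound by integrating this pointwise inequality against $\tpw(1,x,z)$ and applying \eqref{eq:tpHIntIneq} at $t=1$: $\int_D \tpw(1,x,z) z^{-\beta}\,dz \le \int_D \tpw(1,x,z) H(1,z)\,dz \le (M+1)\,H(x)$. This gives the first claim with $C=M+1$.

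For the second claim \eqref{eq:doubleInttpwApprox}, I would simply specialize \eqref{eq:pw(-beta-alpha)IntApprox} to $t=1$, which bounds the double integral by $M\int_0^1 (1 + s^{\delta/\alpha} x^{-\delta})\,ds + \tfrac{\kappa_{1/2}}{\kappa_{1/2}-\kappa}\,x^{-\beta}$. The $s$-integral equals $1 + \tfrac{\alpha}{\alpha+\delta}\,x^{-\delta}$, which is at most a constant times $H(x)=1+x^{-\delta}$; and the same elementary pointwise comparison used above also yields $x^{-\beta}\le H(x)$. Combining these gives the bound $cH(x)$ for a suitable constant $c$.

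I do not expect any genuine obstacle; this step is essentially bookkeeping, which is why it is presented as a corollary. The only place requiring mild attention is the boundary case $\delta=1/2$, where \eqref{eq:pw(-beta-alpha)IntApprox} was derived under the strict hypothesis $\delta\in(0,1/2)$: for $\beta\in(0,1/2)$ the estimate still goes through by applying Lemma \ref{lem:iiptconv} at a slightly larger auxiliary exponent $\beta'\in(\beta,1/2)$, for which $\kappa_{\beta'}>\kappa_\beta$ provides the necessary convergence.
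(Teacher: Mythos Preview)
Your main argument is correct and matches the paper's one-line justification exactly: the first bound follows from the pointwise inequality $z^{-\beta}\le 1+z^{-\delta}=H(z)$ together with \eqref{eq:tpHIntIneq} at $t=1$, and the second from \eqref{eq:pw(-beta-alpha)IntApprox} specialized to $t=1$.

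Your aside about $\delta=1/2$ is right to flag the gap, but the proposed fix does not work. The estimate \eqref{eq:estbgd} that underlies \eqref{eq:pw(-beta-alpha)IntApprox} requires $\kappa_{\beta'}>\kappa=\kappa_{1/2}$, not merely $\kappa_{\beta'}>\kappa_\beta$; since $\beta\mapsto\kappa_\beta$ is maximized at $1/2$, no auxiliary exponent $\beta'\in(\beta,1/2)$ achieves this, and indeed for $\delta=1/2$ the range $(\delta,1-\delta)$ in \eqref{eq:estbgd} is empty. A correct route for $\beta<\delta$ (valid for all $\delta\le 1/2$) is to combine the already-proved first bound with \eqref{eq:tpInt}: since $\kappa-\kappa_\beta>0$ for $\beta<\delta$, rearranging \eqref{eq:tpInt} at $t=1$ gives
\[
(\kappa-\kappa_\beta)\int_0^1\!\int_D \tpw(s,x,z)\,z^{-\alpha-\beta}\,dz\,ds \;=\; \int_D \tpw(1,x,z)\,z^{-\beta}\,dz \;-\; x^{-\beta} \;\le\; C\,H(x).
\]
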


\begin{lemma} \label{lemma:tpxSmallyLargeIntApprox}
	There exists a constant $c > 0$ such that for $t \in (0,1)$, 
	\begin{equation*}
		\tpw(t,x,y) \le c t^{-1/2} \pw(t,x,y) H(t,x), \qquad x \le 2(2\kappa)^{1/\alpha} \le y.
	\end{equation*}
\end{lemma}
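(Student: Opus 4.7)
The plan is to iterate the Duhamel formula once via Lemma \ref{lemma:tpIntApprox2} with cutoff $R_0=(2\kappa)^{1/\alpha}$, replace $\pw(s,z,y)$ by its far-field estimate from \eqref{eq:pwUsefullApprox}, and reduce the whole bound to the double integral $\int_0^t\!\int_D\tpw(s,x,z)z^{-\alpha}\,dz\,ds$, which is controlled by $H(t,x)$ through the $\beta=0$ case of \eqref{eq:tpInt}. Since $t\in(0,1)$ we have $tq(R_0)=t/2\le 1/2$, so Lemma \ref{lemma:tpIntApprox2} gives
\[
\tpw(t,x,y)\le 2\pw(t,x,y)+2\int_0^t\!\int_0^{R_0}\tpw(t-s,x,z)q(z)\pw(s,z,y)\,dz\,ds.
\]
The first summand is $\le 2\,t^{-1/2}\pw(t,x,y)H(t,x)$ since $t^{-1/2}\ge 1$ and $H(t,x)\ge 1$. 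For the integral, $y\ge 2R_0\ge 2z$ on $\{z<R_0\}$, so \eqref{eq:pwUsefullApprox} applies; taking $(1\wedge z^{\alpha/2}/s^{1/2})\le z^{\alpha/2}/s^{1/2}$ (which cancels the $z^{-\alpha/2}$) and $(1\wedge y^{\alpha/2}/s^{1/2})\le 1$ yields $\pw(s,z,y)\le c\,s^{1/2}/y^{1+\alpha/2}$. Substituting this, bounding $s^{1/2}\le t^{1/2}$ and changing variable to $u=t-s$ gives
\[
\int_0^t\!\int_0^{R_0}\tpw(t-s,x,z)q(z)\pw(s,z,y)\,dz\,ds\le \frac{c\kappa\,t^{1/2}}{y^{1+\alpha/2}}\int_0^t\!\int_D\tpw(u,x,z)z^{-\alpha}\,dz\,du.
\]

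The central step is
\[
\kappa\int_0^t\!\int_D\tpw(s,x,z)z^{-\alpha}\,dz\,ds\le M\,H(t,x),
\]
the formal $\beta=0$ case of \eqref{eq:tpInt}. Letting $\beta\searrow 0$, one has $\kappa_\beta\sim\beta\,\Gamma(\alpha/2)\Gamma(1+\alpha/2)\to 0$ and $x^{-\beta}\to 1$; dominated convergence (with dominants from Lemmas \ref{lemma:tpIntApprox} and \ref{lem:iiptconv}) passes both sides of \eqref{eq:tpInt} to the limit, yielding $\int_D\tpw(t,x,z)\,dz=1+\kappa\int_0^t\!\int_D\tpw(s,x,z)z^{-\alpha}\,dz\,ds$, and the bound follows from Lemma \ref{lemma:tpIntApprox}. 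Combining everything gives
\[
\tpw(t,x,y)\le 2\pw(t,x,y)+\frac{C\,t^{1/2}H(t,x)}{y^{1+\alpha/2}}.
\]

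To finish one needs $t^{1/2}/y^{1+\alpha/2}\le c\,t^{-1/2}\pw(t,x,y)$. Since $y\ge 2R_0$, the factor $(1\wedge y^{\alpha/2}/t^{1/2})$ in the lower half of \eqref{eq:pDApprox} is bounded below by a positive constant depending only on $\kappa,\alpha$; combined with $|x-y|\le x+y\le 2y$ this produces
\[
\pw(t,x,y)\ge \frac{c}{y^{1+\alpha/2}}\min\!\bigl(t^{1/2},\,t/x^{\alpha/2}\bigr).
\]
Since $x\le 2R_0$, a case split on $x\ge t^{1/\alpha}$ vs. $x<t^{1/\alpha}$ shows that $t/\min(t^{1/2},t/x^{\alpha/2})$ is bounded by a constant depending on $\kappa,\alpha$, giving $t/y^{1+\alpha/2}\le c'\,\pw(t,x,y)$ and hence $t^{1/2}/y^{1+\alpha/2}\le c'\,t^{-1/2}\pw(t,x,y)$.

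The only non-trivial point is the $\beta\to 0$ limit in \eqref{eq:tpInt}; this is a routine passage using $\kappa_\beta\sim\beta\,\Gamma(\alpha/2)\Gamma(1+\alpha/2)$ together with the finiteness statements in Lemmas \ref{lemma:tpIntApprox} and \ref{lem:iiptconv}.
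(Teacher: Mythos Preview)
Your proof is correct and follows essentially the same strategy as the paper: apply Lemma~\ref{lemma:tpIntApprox2} with cutoff $R_0=(2\kappa)^{1/\alpha}$, bound $\pw(s,z,y)$ by $cs^{1/2}y^{-1-\alpha/2}$ via \eqref{eq:pwUsefullApprox}, and then compare $t\,y^{-1-\alpha/2}$ with $\pw(t,x,y)$ using that $x\le 2R_0$ and $t<1$.

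The one place where you diverge is the ``central step'' bounding $\kappa\int_0^t\!\int_D \tpw(s,x,z)z^{-\alpha}\,dz\,ds$. The paper gets this in one line by integrating the (weighted) Duhamel formula over $y$ and using $\int_D\pw(t-s,z,y)\,dy=1$, which gives
\[
\int_0^t\!\int_D \tpw(t-s,x,z)q(z)\,dz\,ds \;=\; \int_D \tpw(t,x,y)\,dy \;-\;1 \;\le\; M\,H(t,x)
\]
directly from Lemma~\ref{lemma:tpIntApprox}. Your route through the $\beta\searrow 0$ limit of \eqref{eq:tpInt} lands on exactly the same identity but is more circuitous, and your appeal to Lemma~\ref{lem:iiptconv} for the dominant is stated there only for $\delta\in(0,1/2)$, so the case $\delta=1/2$ would need a separate word (it does follow, e.g.\ from \eqref{eq:tpInt} itself once established). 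Replacing that paragraph by the direct integration of Duhamel removes the issue and shortens the argument.
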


\begin{proof}
	Let $R = (2\kappa)^{1/\alpha}$. Due to Lemma \ref{lemma:tpIntApprox2} and since $tq(R) \leq 1/2$ it suffices to estimate the integral in \eqref{eq:tpIntApprox2}. 
	Notice that by integrating \eqref{eq:Df2} over $y$ and by Lemma \ref{lemma:tpIntApprox},
	\begin{align} \label{eq:tpRIntApprox3}
		\int_0^t \int_{z < R} \tpw(t-s,x,z) q(z) dz ds \leq \int_D \tpw(t,x,z) dz \le M H(t,x).
	\end{align}
	Therefore by \eqref{eq:pwUsefullApprox}
	\begin{align*}
		\int_0^t \int_{z < R} \tpw(t-s,x,z) q(z) \pw(s,z,y) dz ds  &\leq c \int_0^t \int_{z < R} \tpw(t-s,x,z) q(z) t^{1/2} y^{-1-\alpha/2} dz ds \\
		&\leq c t^{1/2} H(t,x) y^{-1-\alpha/2} .
	\end{align*}
There exists a constant $c$ such that $ty^{-1 - \alpha} \leq cx^{-\alpha/2} (1 \wedge x^{\alpha/2} t^{-1/2} ) ( t^{-1/\alpha} \wedge t / |x-y|^{-1-\alpha} ). $ Hence $ t^{1/2} H(t,x) y^{-1-\alpha/2} \leq ct^{-1/2}H(t,x) \pw(t,x,y). $
\end{proof}

\begin{lemma} \label{lemma:tpxLargeyLargeIntApprox}
	There exists a constant $c > 0$ such that,
	\begin{equation*}
		\tpw(1,x,y) \le c \pw(1,x,y) H(x), \qquad x, y \ge 2(2\kappa)^{1/\alpha}.
	\end{equation*}
\end{lemma}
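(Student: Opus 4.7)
The plan is to follow the same template as the proof of Lemma \ref{lemma:tpxSmallyLargeIntApprox}, bootstrapping from that lemma via the quasi-symmetries \eqref{eq:pwQuSym} and \eqref{eq:tpwQuSym}. Set $R = (2\kappa)^{1/\alpha}$, so that $q(R) = 1/2$. First, I apply Lemma \ref{lemma:tpIntApprox2} at $t=1$ to obtain
\[
\tpw(1,x,y) \le 2\pw(1,x,y) + 2\int_0^1\!\int_{z<R} \tpw(1-s,x,z)\,q(z)\,\pw(s,z,y)\,dz\,ds.
\]
For $z<R\le 2R\le x$, the quasi-symmetries allow me to apply Lemma \ref{lemma:tpxSmallyLargeIntApprox} itself to $\tpw(1-s,z,x)$, giving
\[
\tpw(1-s,x,z) = \frac{z^\alpha}{x^\alpha}\tpw(1-s,z,x) \le c(1-s)^{-1/2}\pw(1-s,x,z)\,H(1-s,z).
\]
A further application of \eqref{eq:pwUsefullApprox} (combined with \eqref{eq:pwQuSym}) to $\pw(1-s,x,z)$---using $x>2z$---and of \eqref{eq:pwUsefullApprox} directly to $\pw(s,z,y)$---using $y>2z$---bounds the full integrand. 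Since $x,y\ge 2R$, the factors $(1\wedge x^{\alpha/2}/(1-s)^{1/2})$ and $(1\wedge y^{\alpha/2}/s^{1/2})$ are at most $1$, so
\[
\tpw(1-s,x,z)\,q(z)\,\pw(s,z,y) \le \frac{c\,s(1-s)^{1/2}z^{-\alpha}}{x^{1+3\alpha/2}y^{1+\alpha/2}}\,H(1-s,z)\,\bigl(1\wedge \tfrac{z^{\alpha/2}}{(1-s)^{1/2}}\bigr)\bigl(1\wedge \tfrac{z^{\alpha/2}}{s^{1/2}}\bigr).
\]

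Next, I would verify that the $(s,z)$-integral of the right-hand side (divided by the $x,y$-factor) is finite: splitting the $z$-integral at $z = s^{1/\alpha}$ and $z=(1-s)^{1/\alpha}$, the product of minima tames the $z^{-\alpha}$ (and, after substituting $H(1-s,z)\le c(1+z^{-\delta})$, the $z^{-\alpha-\delta}$) singularity at the origin, while the polynomial weights $s(1-s)^{1/2}$ handle the $s$-behaviour. Consequently
\[
\int_0^1\!\int_{z<R}\tpw(1-s,x,z)\,q(z)\,\pw(s,z,y)\,dz\,ds \le \frac{C}{x^{1+3\alpha/2}\,y^{1+\alpha/2}}.
\]

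To finish, I check that $x^{-(1+3\alpha/2)}\,y^{-(1+\alpha/2)} \le c\,\pw(1,x,y)\,H(x)$ on the region $x,y\ge 2R$. Since $H(x)$ is bounded above and below by positive constants on $[2R,\infty)$, this reduces to $x^{-(1+3\alpha/2)}\,y^{-(1+\alpha/2)}\le c\,\pw(1,x,y)$. By \eqref{eq:pDApprox} and the fact that $1\wedge x^{\alpha/2}$ and $1\wedge y^{\alpha/2}$ are bounded below on $[2R,\infty)$, $\pw(1,x,y)\ge c(y/x)^{\alpha/2}(1\wedge|x-y|^{-1-\alpha})$, so the required inequality is equivalent to $1\le c\,x^{1+\alpha}y^{1+\alpha}(1\wedge|x-y|^{-1-\alpha})$. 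This follows from $x^{1+\alpha}y^{1+\alpha}\ge (2R)^{2+2\alpha}$ when $|x-y|\le 1$, and from $|x-y|\le 2\max(x,y)$ together with $\min(x,y)\ge 2R$ (giving $|x-y|^{1+\alpha}/(x^{1+\alpha}y^{1+\alpha})\le 2^{1+\alpha}/(2R)^{1+\alpha}$) otherwise.

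The main obstacle is the convergence of the $(s,z)$-integral: near $s=0$ (and symmetrically $s=1$), when $\alpha/2+\delta\ge 1$ the inner $z$-integral can grow polynomially in $1/s$, so I must verify that the outer weights $s(1-s)^{1/2}$---together with the decay of the $H(1-s,z)$ factor in $s$---beat this blow-up. This relies crucially on the constraints $\alpha\in(0,2)$ and $\delta\in(0,1/2]$, which secure the required polynomial integrability at both endpoints.
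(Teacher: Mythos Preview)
Your proof is correct and follows exactly the paper's route: apply Lemma~\ref{lemma:tpIntApprox2}, use the quasi-symmetry together with Lemma~\ref{lemma:tpxSmallyLargeIntApprox} to bound $\tpw(1-s,x,z)$, estimate both $\pw$ factors via~\eqref{eq:pwUsefullApprox}, and compare the resulting $x^{-1-3\alpha/2}y^{-1-\alpha/2}$ with $\pw(1,x,y)$. The ``main obstacle'' you flag dissolves once you use the crude bound $(1\wedge z^{\alpha/2}/(1-s)^{1/2})(1\wedge z^{\alpha/2}/s^{1/2})\le z^{\alpha}((1-s)s)^{-1/2}$, which cancels $z^{-\alpha}$ exactly and reduces the $(s,z)$-integral to $\int_0^1 s^{1/2}\int_0^R H(1-s,z)\,dz\,ds<\infty$ (since $\delta<1$); this is precisely what the paper does, so no splitting is needed.
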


\begin{proof}
	Let $R = (2\kappa)^{1/\alpha}$. By Lemma \ref{lemma:tpxSmallyLargeIntApprox} and \eqref{eq:pwUsefullApprox},
	\begin{align*}
		&\int_0^1 \int_{z < R} \tpw(1-s,x,z) q(z) \pw(s,z,y) dz ds  \\
		&= \int_0^1 \int_{z < R} \tpw(1-s,z,x) \frac{ z^{\alpha} }{ x^{\alpha} } q(z) \pw(s,z,y) dz ds  \\
		&\leq c \int_0^1 \int_{z < R} \pw(1-s,z,x) x^{-\alpha} (1-s)^{-1/2} H(1-s,z) \pw(s,z,y) dz ds  \\			
		&\leq c \int_0^1 \int_{z < R}  x^{-1 - 3\alpha/2} (1 + z^{-\delta}(1-s)^{\delta/\alpha} )  \frac{ s^{1/2} }{ y^{1 + \alpha/2} } dz ds \leq c x^{-1 - 3\alpha/2}y^{- 1 - \alpha/2}.
	\end{align*}
	Since $(xy)^{-1-\alpha} > c(1 \wedge |x-y|^{-1-\alpha})$ for $x,y \leq 2R$ we get that $x^{-1 - 3\alpha/2}y^{- 1 - \alpha/2} \leq c \pw(1,x,y)$. Hence Lemma \ref{lemma:tpIntApprox2} yields result.
\end{proof}

\noindent We define $v(y)=y^\alpha/(2^{\alpha+1}\kappa)$. 


\begin{lemma} \label{lemma:tpxSmallySmallInequalities1}
	Let $x, y \leq 2(2\kappa)^{1/\alpha}.$ There exists a constant $c$ such that 
	\begin{align} \label{eq:tpxSmallySmallInequalities1}
		\begin{split}
			\tpw(1,x,y) &\le c H(x) \eta_1(y) + c \int_0^{1/2} \int_{2z < y} \tpw(1-s,x,z)q(z) \eta_s(y) dz ds \\
			&+ c \int_{v(y)}^{1} \int_{2z > y} \tpw(1-s,x,z)q(z)\pw(s,z,y) dzds.
		\end{split}
	\end{align}
\end{lemma}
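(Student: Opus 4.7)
My plan is to apply the Duhamel formula
\[
\tpw(1,x,y)=\pw(1,x,y)+\int_0^1\!\!\int_D \tpw(1-s,x,z)\,q(z)\,\pw(s,z,y)\,dz\,ds,
\]
and split the double integral into four regions defined by whether $2z<y$ or $2z>y$, with a different time cutoff in each $z$-region: $s\lessgtr 1/2$ when $2z<y$, and $s\lessgtr v(y)$ when $2z>y$. The pieces over $\{s<1/2,\,2z<y\}$ and $\{s>v(y),\,2z>y\}$ will match, up to multiplicative constants, the two integrals appearing in \eqref{eq:tpxSmallySmallInequalities1}; the remaining two pieces, together with $\pw(1,x,y)$, will be bounded either by $cH(x)\eta_1(y)$ or by $\tfrac12 \tpw(1,x,y)$, which I will absorb to the left-hand side.

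The key pointwise inequality in the region $\{2z<y\}$ is $\pw(s,z,y)\le c\,\eta_s(y)$, with $c$ depending on the upper bound $2(2\kappa)^{1/\alpha}$ on $y$. This follows from \eqref{eq:pDApprox} and \eqref{eq:etaApprox}: since $|z-y|\ge y/2$ in this region, $p(s,z,y)\approx p(s,0,y)$, and matching the resulting expression for $\pw$ against the asymptotics of $\eta_s$ yields the claim after a short case split on whether $z^{\alpha/2}$ and $y^{\alpha/2}$ sit above or below $s^{1/2}$. Applying this bound on the piece $\{s<1/2,\,2z<y\}$ gives directly a constant multiple of the first integral in \eqref{eq:tpxSmallySmallInequalities1}. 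On $\{s>1/2,\,2z<y\}$ I additionally use that $\eta_s(y)\approx \eta_1(y)$ uniformly for $s\in[1/2,1]$ (clear from \eqref{eq:etaApprox}), pull $\eta_1(y)$ out of the integral, and bound the remaining $\int_0^1\!\int_{z<R}\tpw(1-s,x,z)q(z)\,dz\,ds$ by $cH(x)$ via the inequality \eqref{eq:tpRIntApprox3} from the proof of Lemma~\ref{lemma:tpxSmallyLargeIntApprox}, choosing $R=(2\kappa)^{1/\alpha}\ge y/2$. The initial term $\pw(1,x,y)$ is controlled the same way: for $x,y$ bounded, \eqref{eq:pDApprox} and \eqref{eq:etaApprox} give $\pw(1,x,y)\le c\,\eta_1(y)\le c H(x)\eta_1(y)$ because $H(x)\ge1$.

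The last piece, $\{s<v(y),\,2z>y\}$, is absorbed. Since $q$ is decreasing, $q(z)\le q(y/2)=2^\alpha\kappa\,y^{-\alpha}$ on this region, and $\pw\le\tpw$ everywhere. Chapman--Kolmogorov \eqref{eq:tpCK} then yields
\[
\int_0^{v(y)}\!\!\int_D \tpw(1-s,x,z)\,\tpw(s,z,y)\,dz\,ds\;=\;v(y)\,\tpw(1,x,y),
\]
so this piece is at most $q(y/2)\,v(y)\,\tpw(1,x,y)=\tfrac12\tpw(1,x,y)$, which is exactly what the definition $v(y)=y^\alpha/(2^{\alpha+1}\kappa)$ is engineered for. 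Moving $\tfrac12\tpw(1,x,y)$ to the left-hand side and collecting the previous bounds yields \eqref{eq:tpxSmallySmallInequalities1}. The main technical point is the pointwise comparison $\pw(s,z,y)\le c\,\eta_s(y)$ on $\{2z<y\}$; everything else is bookkeeping driven by the identity $q(y/2)v(y)=1/2$ (which forces the $s=v(y)$ cutoff in the region $2z>y$) and by the $s\in[1/2,1]$ comparability of $\eta_s$ with $\eta_1$ (which is what makes the $s=1/2$ cutoff in the region $2z<y$ work).
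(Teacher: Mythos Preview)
Your proof is correct and follows essentially the same route as the paper: Duhamel, the four-region split with cutoffs $s=1/2$ on $\{2z<y\}$ and $s=v(y)$ on $\{2z>y\}$, the pointwise bound $\pw(s,z,y)\le c\,\eta_s(y)$ for $2z<y$, the use of \eqref{eq:tpRIntApprox3} on the $\{s>1/2,\,2z<y\}$ piece, and the absorption of the $\{s<v(y),\,2z>y\}$ piece via $q(y/2)v(y)=1/2$. The only cosmetic difference is that the paper first invokes Lemma~\ref{lemma:tpIntApprox2} before splitting, whereas you start from Duhamel directly; your bookkeeping is in fact slightly cleaner here. One minor remark: the bound $\pw(s,z,y)\le c\,\eta_s(y)$ on $\{2z<y\}$ actually holds with a universal constant, not one depending on the upper bound for $y$---the dependence on the range of $y$ enters only when you compare $\pw(1,x,y)$ with $\eta_1(y)$.
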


\begin{proof}
	At first let us notice that by \eqref{eq:pwUsefullApprox} and \eqref{eq:etaApprox} it follows that $\pw(s,z,y) \leq c\eta_s(y)$ for $2z \leq y.$ Additionally for any $z,y \in D$ and $s > 1/2$ we obtain $\pw(s,z,y) \leq c\eta_1(y).$
	
	Let $R = (2\kappa)^{1/\alpha}$. By Lemma \ref{lemma:tpIntApprox2}
	\begin{align*}
		\tpw(1,x,y) &\leq 2\eta_1(y) \\ 
		&+ 2 \bigg( \int_0^1 \int_{2z \le y} \tpw(1-s,x,z)q(z)\pw(s,z,y) dz ds \\ 
		&+ \int_0^1 \int_{2z > y} \tpw(1-s,x,z)q(z)\pw(s,z,y) dz ds \bigg).
	\end{align*} 
	By inequality \eqref{eq:tpRIntApprox3} 
	\begin{align*}
		&\int_0^1 \int_{2z \le y} \tpw(1-s,x,z)q(z)\pw(s,z,y) dz ds \\ 
		&\leq c\int_0^{1/2} \int_{2z \le y} \tpw(1-s,x,z)q(z)\eta_s(y) dz ds + cH(x)\eta_1(y).
	\end{align*}
	We have $v(y) \leq 1$ whenever $y \leq 2R.$ By inequality $\pw \leq \tpw$ and Chapman-Kolmogorov equation
	\begin{align*}
		&\int_0^1 \int_{2z > y} \tpw(1-s,x,z)q(z)\pw(s,z,y) dz ds \\ 
		&\leq \int_0^{1 \wedge v(y)} \int_{2z > y} \tpw(1-s,x,z)q(y/2) \pw(s,z,y) dz ds \\
		&+\int_{1 \wedge v(y)}^1 \int_{2z > y} \tpw(1-s,x,z)q(z) \pw(s,z,y) dz ds \\ 
		&\leq \frac{1}{2} \tpw(1,x,y) + \int_{1 \wedge v(y)}^1 \int_{2z > y} \tpw(1-s,x,z)q(z) \pw(s,z,y) dz ds.
	\end{align*} 
	Which ends proof of the inequality \eqref{eq:tpxSmallySmallInequalities1}. 
\end{proof}

\begin{lemma} \label{lemma:tpxSmallySmallInequalities2}
	For every $\gamma \in (0, \delta)$ there is a constant $C_{\gamma}$ such that 
	\begin{equation} \label{eq:tpxSmallySmallInequalities2}
		\tpw(1,x,y) \leq C_{\gamma} y^{\gamma - 1}H(x), \qquad x, y \leq 2(2\kappa)^{1/\alpha}.
	\end{equation}
\end{lemma}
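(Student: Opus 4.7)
The plan is to apply the three-term decomposition from Lemma \ref{lemma:tpxSmallySmallInequalities1} and bound each summand by a constant multiple of $y^{\gamma-1}H(x)$. For the first term $cH(x)\eta_{1}(y)$, the asymptotics in \eqref{eq:etaApprox} give $\eta_{1}(y)\le cy^{\gamma-1}$ for every $y>0$: $y^{\gamma-1}$ dominates the bounded quantity $\eta_{1}(y)$ near the origin, and for $y$ large the stronger decay $\eta_{1}(y)\approx y^{-1-\alpha}$ is bounded by $y^{\gamma-1}$ because $\gamma+\alpha>0$.

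For the second term
\[
T_{2}=c\int_{0}^{1/2}\int_{2z<y}\tpw(1-s,x,z)q(z)\eta_{s}(y)\,dz\,ds,
\]
I would use the scaling relation $\eta_{s}(y)=s^{-1/\alpha}\eta_{1}(s^{-1/\alpha}y)$ together with the preceding pointwise bound to obtain $\eta_{s}(y)\le cs^{-\gamma/\alpha}y^{\gamma-1}$. Because $z\le y/2\le(2\kappa)^{1/\alpha}$, one may replace $z^{-\alpha}$ by $cz^{-\alpha-\beta}$ for a fixed small $\beta\in(0,\delta]$, reducing the task to controlling
\[
y^{\gamma-1}\int_{0}^{1/2}s^{-\gamma/\alpha}\int_{D}\tpw(1-s,x,z)z^{-\alpha-\beta}\,dz\,ds,
\]
which is bounded by $C_{\gamma}y^{\gamma-1}H(x)$ through \eqref{eq:doubleInttpwApprox} (after the change of variables $u=1-s$).

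For the third term
\[
T_{3}=c\int_{v(y)}^{1}\int_{2z>y}\tpw(1-s,x,z)q(z)\pw(s,z,y)\,dz\,ds,
\]
I would split the $z$-range at $2(2\kappa)^{1/\alpha}$. When $z>2(2\kappa)^{1/\alpha}$, $q(z)$ is uniformly bounded and Lemma \ref{lemma:tpxSmallyLargeIntApprox} dominates $\tpw(1-s,x,z)$ by $c(1-s)^{-1/2}H(1-s,x)\pw(1-s,x,z)$; Chapman--Kolmogorov collapses the convolution $\pw*\pw$ into $\pw(1,x,y)$, which for bounded $x,y$ is of order $y^{\alpha}$ and hence dominated by $y^{\gamma-1}$. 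For $y/2<z\le2(2\kappa)^{1/\alpha}$, I would use $q(z)\le cy^{-\alpha}$ together with \eqref{eq:pwUsefullApprox} (through the quasi-symmetry \eqref{eq:pwQuSym} when $z>2y$, or a direct bound on the free kernel when $z$ and $y$ are comparable), and exploit the lower cutoff $s>v(y)\asymp y^{\alpha}$ for the remaining $s$-integral.

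The main obstacle is ensuring $s$-integrability in the $T_{2}$ step: the factor $s^{-\gamma/\alpha}$ is integrable only when $\gamma<\alpha$. When $\gamma\ge\alpha$, which can occur for small $\alpha$, one has to further split the $s$-integral at $s\asymp y^{\alpha}$ and combine the two regimes of $\eta_{s}(y)$ separately, exploiting the tail bound $\int_{1-s_{0}}^{1}\int_{D}\tpw(u,x,z)z^{-\alpha-\beta}\,dz\,du=(G(1)-G(1-s_{0}))/(\kappa-\kappa_{\beta})$, where $G(u):=\int_{D}\tpw(u,x,z)z^{-\beta}\,dz$ is monotone and bounded by \eqref{eq:tpInt}, to recover the necessary power of $y$.
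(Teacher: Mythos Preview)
Your overall plan---apply Lemma~\ref{lemma:tpxSmallySmallInequalities1} and bound each of the three terms---is the paper's plan too, and your treatment of $T_1$ is correct. The gap is in $T_2$.

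Your bound $\eta_s(y)\le c\,s^{-\gamma/\alpha}y^{\gamma-1}$ is valid, but it leaves a singular $s$-weight that \eqref{eq:doubleInttpwApprox} cannot absorb. After your substitution $u=1-s$ the integral becomes
\[
\int_{1/2}^{1}(1-u)^{-\gamma/\alpha}\,f(u)\,du,\qquad f(u)=\int_D\tpw(u,x,z)\,z^{-\alpha-\beta}\,dz,
\]
and \eqref{eq:doubleInttpwApprox} only tells you $\int_0^1 f(u)\,du\le cH(x)$, i.e.\ an $L^1$ bound on $f$. That does \emph{not} control the integral against the unbounded weight $(1-u)^{-\gamma/\alpha}$, regardless of whether $\gamma<\alpha$ or not; integrability of the weight alone is irrelevant since no pointwise (or $L^\infty$) bound on $f$ is available at this stage of the paper. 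Your proposed workaround via monotonicity of $G(u)=\int_D\tpw(u,x,z)z^{-\beta}dz$ does not help either, because $f$ is not $G$.

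The missing idea is to avoid putting any $s$-dependence into the bound on $\eta_s(y)$. The paper uses instead the \emph{uniform} estimate $\sup_{s>0}\eta_s(y)\le c\,y^{-1}$ (both branches of \eqref{eq:etaApprox} are maximized at $s\approx y^\alpha$, where the value is $\approx y^{-1}$), and extracts the missing factor $y^{\gamma}$ from the $z$-constraint: on $\{2z\le y\}$ one has $z^{-\alpha}\le (y/z)^{\gamma}z^{-\alpha}=y^{\gamma}z^{-\gamma-\alpha}$. This gives
\[
T_2\le c\,y^{\gamma-1}\int_0^1\!\!\int_D\tpw(1-s,x,z)\,z^{-\gamma-\alpha}\,dz\,ds\le c\,y^{\gamma-1}H(x)
\]
directly from \eqref{eq:doubleInttpwApprox} with $\beta=\gamma\in(0,\delta)$, no $s$-weight at all. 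The same trick handles $T_3$: the cutoff $s\ge v(y)\approx y^{\alpha}$ forces $\pw(s,z,y)\le c\,y^{-1}$ for $y/2<z<2y$ and $\pw(s,z,y)\le c\,z^{-1}$ for $z>2y$; in both cases $q(z)\cdot(\text{bound})\le c\,y^{\gamma-1}z^{-\gamma-\alpha}$, and \eqref{eq:doubleInttpwApprox} closes the estimate. Your $T_3$ sketch (splitting at $z=2(2\kappa)^{1/\alpha}$ and invoking Lemma~\ref{lemma:tpxSmallyLargeIntApprox}) is workable for the large-$z$ part but unnecessarily elaborate; the uniform-in-$s$ bound on $\pw(s,z,y)$ is the clean route.
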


\begin{proof}
		Let $\gamma \in (0, \delta).$ By \eqref{eq:doubleInttpwApprox} and since $\eta_s(y) \leq cy^{-1}$ we have,
	\begin{align*}
		\int^{1/2}_0\int_{2z \leq y}\tpw(1-s,x,z)q(z)\eta_s(y)dzds 
		&\le c\int_0^1 \int_{z \le y} \tpw(1-s,x,z)y^\gamma z^{-\gamma-\alpha} y^{-1} dz ds \\
		&\le c y^{\gamma-1} H(x).
	\end{align*}
	If $v(y) \le s \le 1$, then $\pw(s,z,y) \le cy^{-1}$ for $z < 2y$ or $z>2y$. Therefore
	\begin{align*}
		&\int^1_{v(y)}\int_{2z>y} \tpw(1-s,x,z)q(z)\pw(s,z,y)dzds \\
		&\le cy^{-1}\int_{v(y)}^1 \int_{z<2y} \tpw(1-s,x,z) q(z) dz ds  +\,c\int_{v(y)}^1 \int_{z> 2y} \tpw(1-s,x,z) q(z) z^{-1} dz ds \\ 
		&\le \, c y^{\gamma-1}\int_{0}^1 \int_{D} \tpw(1-s,x,z) z^{-\gamma-\alpha} dz ds \le  c H(x)y^{\gamma-1},
	\end{align*}
	where the last inequality follows from \eqref{eq:doubleInttpwApprox}. Since $\eta_1(y)\leq c y^{\gamma-1}$, we obtain the claim by inequality \eqref{eq:tpxSmallySmallInequalities1}.
\end{proof}


	\begin{lemma}\label{lemma:Inttp(z_-beta)Approx} 
	For all $\beta \in (\delta, 1 + \alpha - \delta)$ and $\lambda \in (0, \delta)$ there is a constant $C_\lambda$ such that
	\begin{equation*}
		\int_{D} \tpw(t,x,z) z^{-\beta} dz \le C_\lambda x^{-\beta} \Big(1 \vee x^{-\lambda} \Big), \qquad 0< t \le 1 , \quad x \in D.
	\end{equation*}
	\end{lemma}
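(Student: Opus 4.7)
My plan is to reduce to $t=1$ by scaling and then dissect the integral $\int_D \tpw(1,x,y) y^{-\beta} dy$ into pieces handled by the lemmas already proved. By the scaling property, $\int_D \tpw(t,x,z) z^{-\beta} dz = t^{-\beta/\alpha}\int_D \tpw(1, t^{-1/\alpha}x, u) u^{-\beta} du$, and since $(1 \vee (t^{-1/\alpha}x)^{-\lambda}) \le (1 \vee x^{-\lambda})$ whenever $t \le 1$, the case $t=1$ suffices. I set $M = 2(2\kappa)^{1/\alpha}$, enlarging it if necessary so that $M \ge 1$, and split $\int_D = \int_0^M + \int_M^\infty$.

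The ``outer'' pieces should be comparatively easy. For $\int_M^\infty$, I would apply Lemma~\ref{lemma:tpxSmallyLargeIntApprox} (if $x \le M$) or Lemma~\ref{lemma:tpxLargeyLargeIntApprox} (if $x \ge M$) to dominate $\tpw(1,x,y) \le c\, \pw(1,x,y) H(x)$, and then estimate $\int_M^\infty \pw(1,x,y) y^{-\beta} dy$ directly from \eqref{eq:pDApprox}: this yields a constant when $x \le M$, so the outer contribution is $\le C H(x) \le C x^{-\delta} \le C x^{-\beta-\lambda}$ for $x \le 1$ (since $\beta + \lambda > \delta$), and at most $C x^{-\beta}$ when $x \ge M$. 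For the ``small~$y$, large~$x$'' piece with $x \ge M \ge y$, I would combine \eqref{eq:tpwQuSym}, Lemma~\ref{lemma:tpxSmallyLargeIntApprox} applied to $\tpw(1,y,x)$, and \eqref{eq:pwQuSym} to obtain $\tpw(1,x,y) \le c\, \pw(1,x,y) H(y)$; direct integration using \eqref{eq:pDApprox} then produces a bound of order $x^{-1-3\alpha/2} \le C x^{-\beta}$ because $\beta < 1+3\alpha/2$.

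The core of the argument is the ``small~$x$, small~$y$'' piece $\int_0^M \tpw(1,x,y) y^{-\beta} dy$ with $x \le M$. Here I would use two complementary pointwise estimates on $\tpw(1,x,y)$ for $x, y \le M$: directly from Lemma~\ref{lemma:tpxSmallySmallInequalities2}, bound~(a)~$\tpw(1,x,y) \le C_\gamma y^{\gamma-1} H(x)$, and, by applying Lemma~\ref{lemma:tpxSmallySmallInequalities2} to $\tpw(1,y,x)$ together with \eqref{eq:tpwQuSym}, bound~(b)~$\tpw(1,x,y) \le C_\gamma x^{\gamma-1-\alpha} y^{\alpha} H(y)$, both valid for any $\gamma \in (0,\delta)$. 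I split $\int_0^M = \int_0^x + \int_x^M$, using (b) on $(0,x)$ and (a) on $(x,M)$. Integrability on $(0,x)$ requires $\beta < 1+\alpha-\delta$ (precisely our upper hypothesis), producing $C x^{\gamma-1-\alpha} \cdot x^{1+\alpha-\delta-\beta} = C x^{\gamma-\beta-\delta}$. The integral on $(x,M)$ gives $C H(x) x^{\gamma-\beta}$ (using $\gamma<\delta<\beta$), which for $x \le 1$ is also $C x^{\gamma-\beta-\delta}$. Choosing $\gamma \in (\delta-\lambda, \delta)$, which is nonempty because $\lambda \in (0,\delta)$, makes $x^{\gamma-\beta-\delta} \le x^{-\beta-\lambda}$ for $x \le 1$; the range $x \in [1,M]$ is absorbed by compactness since all integrands are uniformly controlled.

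The hardest step, and the main obstacle, is this final combination. Lemma~\ref{lemma:tpxSmallySmallInequalities2} by itself is not integrable against $y^{-\beta}$ at $y=0$ (the exponent $\gamma-1-\beta$ is less than $-1$ because $\gamma < \delta < \beta$), while its symmetric form (b) carries a divergent $x^{\gamma-1-\alpha}$ prefactor with the wrong power in $x$. Only the combination of (a) and (b), with split point $a = x$ chosen so that both contributions produce the same exponent, together with the freedom $\gamma \in (\delta-\lambda,\delta)$, produces the exponent $\gamma-\beta-\delta$ compatible with the target $x^{-\beta}(1 \vee x^{-\lambda})$.
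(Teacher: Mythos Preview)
Your proposal is correct and follows essentially the same approach as the paper: reduce to $t=1$ by scaling, and for small $x,y$ combine Lemma~\ref{lemma:tpxSmallySmallInequalities2} directly on $(x,M)$ with its quasi-symmetric form via \eqref{eq:tpwQuSym} on $(0,x)$, while the remaining regions are handled by Lemmas~\ref{lemma:tpxSmallyLargeIntApprox} and~\ref{lemma:tpxLargeyLargeIntApprox}. Your case decomposition is organized slightly differently (you split first on $y$, the paper first on $x$), and you treat the region $x\le M<y$ explicitly whereas the paper's write-up omits it, but the substance is identical; your choice $\gamma\in(\delta-\lambda,\delta)$ versus the paper's $\gamma=\delta-\lambda$ is immaterial.
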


	\begin{proof}
	Let $\gamma = \delta-\lambda$, so that $\gamma \in (0, \delta)$ and $\beta - \gamma >0$. Fix $R=2(2\kappa)^{1/\alpha}$ and let $x \le R$. By \eqref{eq:tpwQuSym} and by inequality \eqref{eq:tpxSmallySmallInequalities2} we have,
	\begin{align*}
		\int_{z \le R} \tpw(1,x,z) z^{-\beta} dz &\le  c\int_{z\le y} x^{\gamma-1} z^{-\delta} \bigg(\frac{z}{x}\bigg)^{\alpha} z^{-\beta}  dz
		+ c \int_{x < z \le R} z^{\gamma-1} x^{-\delta}  z^{-\beta} \frac{z^{\alpha}}{x^{\alpha}} dz\\ &\le c x^{\gamma-\beta-\delta} = c x^{-\beta -\lambda}.
	\end{align*}
	Now let $x > R$. By Lemma \ref{lemma:tpxSmallyLargeIntApprox} and Lemma \ref{lemma:tpxLargeyLargeIntApprox} we have $\tpw(1,x,z) \le \pw(1,x,z)H(z)$ for all $z \in D$. Therefore by \eqref{eq:pwQuSym} and \eqref{eq:pwUsefullApprox}
	\begin{align*}
		&\int_{2z \le x} \tpw(1,x,z) z^{-\beta} dz \le  c\int_{2z \le x} \pw(1,x,z) H(z) z^{-\beta} dz \\ 
		&\le c \int_{2z \le x} x^{-1 - \alpha} z^{-\beta + \alpha}\left(1 + z^{-\delta}\right) dz \le cx^{-\beta}\left(1 + z^{-\delta}\right) \le c x^{-\beta},
	\end{align*}
	and also by \eqref{eq:pwIntegral} and Lemma \ref{lemma:tpxLargeyLargeIntApprox}
	\begin{align*}
		&\int_{2z \ge x} \tpw(1,x,z) z^{-\beta} dz \le  c\int_{2z \ge x} \pw(1,x,z) H(z) x^{-\beta} dz \\ 
		&\le c \int_{D} \pw(1,x,z) x^{-\beta} dz = c x^{-\beta}.
	\end{align*}
		By scaling property of $\tpw$:
	\begin{align*}
		&\int_D \tpw(t,x,z) z^{-\beta} dz = \int_D t^{-\beta/\alpha} \tpw(1, x t^{-1/\alpha}, z) z^{-\beta} dz \\
		&\le c t^{-\beta/\alpha} \big(x t^{-1/\alpha} \big)^{-\beta} \Big(1 \vee \big(x t^{-1/\alpha} \big)^{-\lambda} \Big) \le c x^{-\beta} \Big(1 \vee x^{-\lambda} \Big).
	\end{align*}
	\end{proof}

	For $\beta > 0$ we denote $H_\beta(x) = x^{-\beta} + 1$. By Lemma \ref{lemma:(tpz-beta)IntApprox} and Lemma \ref{lemma:Inttp(z_-beta)Approx}, for all $\beta \in (0, 1 + \alpha-\delta)$ and $\lambda \in (0, \delta)$ there is a constant $C_{\lambda}$ such that
	\begin{align} \label{eq:inttpH}
		\int_{D} \tpw(t,x,y) H_{\beta}(y) dy \le C_\lambda H_{(\beta+\lambda)\vee \delta}(x), \qquad t\le1,\;   x\in D.
	\end{align}

	In the next lemma we improve this result.
	
	\begin{lemma} \label{lemma:inttpH(beta)}
	Let $0 <\beta < 1 + \alpha - \delta$. There is a constant $C_\beta$ such that 
		
	\begin{align*}
		\int_{D} \tpw(t,x,y) H_\beta(y) dy\le C_\beta t^{-\beta/\alpha}H(t^{-1/\alpha}x), \qquad 0 < t \le 1,\;   x\in D.
	\end{align*}
	\end{lemma}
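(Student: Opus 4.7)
The plan is to exploit the scaling property of $\tpw$ to reduce the claim to a time-$1$ estimate. Substituting $u = t^{-1/\alpha} y$ and writing $\tilde x = t^{-1/\alpha} x$, the scaling identity for $\tpw$ yields
\begin{equation*}
\int_D \tpw(t,x,y) H_\beta(y)\,dy = t^{-\beta/\alpha}\int_D \tpw(1,\tilde x,u) u^{-\beta}\,du + \int_D \tpw(1,\tilde x,u)\,du.
\end{equation*}
Since $t \le 1$ implies $t^{-\beta/\alpha} \ge 1$, and since Lemma \ref{lemma:tpIntApprox} gives $\int_D \tpw(1,\tilde x,u)\,du \le M H(\tilde x)$, it suffices to establish the time-$1$ inequality
\begin{equation*}
(\star) \qquad \int_D \tpw(1,\tilde x,u) u^{-\beta}\,du \le C H(\tilde x), \qquad \tilde x \in D.
\end{equation*}
For $\beta \in (0,\delta]$, $(\star)$ is exactly Corollary \ref{lemma:(tpz-beta)IntApprox}, so the entire difficulty is in the range $\beta \in (\delta, 1+\alpha - \delta)$.

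For $\beta \in (\delta, 1 + \alpha - \delta)$ I would split the integration at $u = 1$. On $\{u \ge 1\}$ one has $u^{-\beta} \le u^{-\delta}$, so the invariance $\int_D \tpw(1,\tilde x, u) u^{-\delta}\,du = \tilde x^{-\delta}$ from Theorem \ref{thm:tpInt} dominates that contribution by $\tilde x^{-\delta} \le H(\tilde x)$. On $\{u < 1\}$ with $\tilde x$ bounded below by some threshold $R$, the quasi-symmetry \eqref{eq:tpwQuSym} together with Lemmas \ref{lemma:tpxSmallyLargeIntApprox} and \ref{lemma:tpxLargeyLargeIntApprox} forces $\tpw(1,\tilde x,u) \le c\,\pw(1,\tilde x,u) H(u)$, reducing the estimate to a classical $\pw$-integral that follows from \eqref{eq:pDApprox} and Lemma \ref{lemma:pwIdentity}.

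The main obstacle is bounding $\int_{u<1}\tpw(1,\tilde x,u)u^{-\beta}\,du$ when $\tilde x$ is small, since the direct bound $\tpw(1,\tilde x,u) \le C u^{\gamma-1} H(\tilde x)$ of Lemma \ref{lemma:tpxSmallySmallInequalities2} produces the non-integrable factor $u^{\gamma-1-\beta}$ at $0$, and Lemma \ref{lemma:Inttp(z_-beta)Approx} delivers only $\tilde x^{-\beta-\lambda}$, strictly weaker than the desired $\tilde x^{-\delta}$ whenever $\beta + \lambda > \delta$. I would further split $\{u<1\}$ at $u = \tilde x$: on $\{u < \tilde x\}$ the quasi-symmetry \eqref{eq:tpwQuSym} gives $\tpw(1,\tilde x,u) = \tpw(1,u,\tilde x)(u/\tilde x)^\alpha$, and applying Lemma \ref{lemma:tpxSmallySmallInequalities2} to $\tpw(1,u,\tilde x)$ sharpens the pointwise estimate to $\tpw(1,\tilde x,u) \le C\tilde x^{\gamma-1-\alpha} u^{\alpha-\delta}$, whose integrand $u^{\alpha-\delta-\beta}$ is integrable near $0$ precisely because $\beta < 1 + \alpha - \delta$; the complementary range $\{\tilde x \le u < 1\}$ is treated by Lemma \ref{lemma:tpxSmallySmallInequalities2} directly. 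The technical core is then to reconcile the resulting $\tilde x$-exponents against the invariance identity of Theorem \ref{thm:tpInt}, contracting the weaker $\tilde x^{-\beta-\lambda}$ bound down to the sharp $\tilde x^{-\delta}$ controlled by $H(\tilde x)$ — this consolidation is the hardest step.
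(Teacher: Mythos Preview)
Your reduction via scaling and the treatment of $\beta\le\delta$ and of the region $\{u\ge 1\}$ are fine, and the case $\tilde x\ge R$ for the region $\{u<1\}$ can indeed be handled through Lemmas~\ref{lemma:tpxSmallyLargeIntApprox} and~\ref{lemma:tpxLargeyLargeIntApprox}. The genuine gap is exactly the step you label as ``the hardest'': for small $\tilde x$ and $\beta>\delta$, the pointwise bounds you invoke cannot deliver $H(\tilde x)$. Carrying out your computation, the quasi-symmetry together with Lemma~\ref{lemma:tpxSmallySmallInequalities2} gives, after integrating over $\{u<\tilde x\}$ and $\{\tilde x\le u<1\}$, a bound of order $\tilde x^{\gamma-\delta-\beta}$ with $\gamma<\delta$; optimising over $\gamma$ this is at best $\tilde x^{-\beta}$, strictly worse than $\tilde x^{-\delta}$ whenever $\beta>\delta$. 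Invoking Theorem~\ref{thm:tpInt} directly does not rescue this: for $\beta\in(\delta,1-\delta)$ it yields only $\int_D\tpw(1,\tilde x,u)u^{-\beta}\,du\le \tilde x^{-\beta}$, again weaker than $\tilde x^{-\delta}$, and for $\beta\ge 1-\delta$ the identity \eqref{eq:tpInt} is not even available. So your outline stops precisely where the non-trivial content of the lemma begins.

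The paper closes this gap not by sharpening the pointwise estimate but by a bootstrap through Chapman--Kolmogorov and a time integration. One writes $\tpw(1,x,y)=\int_0^1\int_D\tpw(s,x,z)\tpw(1-s,z,y)\,dz\,ds$, applies the crude estimate \eqref{eq:inttpH} to the inner $y$-integral to obtain $H_{\beta+\epsilon}(z)$, and then uses Theorem~\ref{thm:tpInt} together with Corollary~\ref{lemma:(tpz-beta)IntApprox} to control the remaining time integral $\int_0^1\int_D\tpw(s,x,z)H_{\gamma}(z)\,dz\,ds$: for $\delta<\gamma<\delta+\alpha$ this is bounded by $cH(x)$, and for $\delta+\alpha<\gamma<1+\alpha-\delta$ by $cH_{\gamma-\alpha}(x)$. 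Thus each pass through Chapman--Kolmogorov lowers the effective exponent by almost $\alpha$; for $\beta<\delta+\alpha$ a single pass suffices, while for larger $\beta$ one iterates finitely many times (using $\tpw(n+1,\cdot,\cdot)$ and scaling) until the exponent drops below $\delta+\alpha$. This descent mechanism is the idea your proposal is missing.
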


	\begin{proof}
	By the scaling of $\tpw$ it suffices to prove the result for fixed $t>0$.
	If $\beta \le \delta$, then we simply apply Lemma \ref{lemma:tpIntApprox} and Lemma \ref{lemma:(tpz-beta)IntApprox}, so suppose that $\beta = \delta + \xi \alpha$ for some $\xi >0$. 
		
	First, let $\xi <1$. By Lemma \ref{lemma:tpIntApprox}, Theorem \ref{thm:tpInt} and Lemma \ref{lemma:(tpz-beta)IntApprox},
	\begin{align} \label{eq:dInttpHgamma1}
		\begin{split}
		&\int_0^1 \int_{D} \tpw(s,x,z) H_{\gamma}(z) \,dz\,ds \\
		&\le c\int_0^1 \int_{D} \tpw(s,x,z) H_{\gamma \vee (\alpha + \delta/2)}(z) \,dz\,ds \le c H(x),  \qquad \text{if} \quad \delta < \gamma < \delta+\alpha.
		\end{split}
	\end{align}	
	Let $0 < \epsilon < (1-\xi)\alpha\wedge \delta$, so that $\beta + \epsilon < \delta+\alpha$. By \ref{eq:inttpH} and \eqref{eq:dInttpHgamma1}, 
	\begin{align*}
		\int_{D} \tpw(1,x,y) H_{\beta}(y) dy & = \int_{D}\int_{D} \tpw(s,x,z)\tpw(1-s,z,y) H_{\beta}(y)\, dz\,dy \label{eq1.5:inttp} \\
		&=\int_0^1\int_{D}\int_{D} \tpw(s,x,z)\tpw(1-s,z,y) H_{\beta}(y)\, dy\,dz\,ds \notag \\
		&\le C \int_0^1\int_{D} \tpw(s,x,z) H_{\beta+\epsilon}(z) \,dz\,ds  \le C H(x),
	\end{align*}
	as needed.
	Next, let $\xi \ge 1$.  By Lemma \ref{lemma:tpIntApprox} and Theorem \ref{thm:tpInt}
	\begin{align} 
		\int_0^1\int_{D} \tpw(s,x,z) H_{\gamma}(z) \,dz\,ds &\le c H_{\gamma-\alpha}(x) \quad \text{if} \quad \delta +\alpha < \gamma < 1 + \alpha - \delta.
	\end{align} \label{eq:dInttpHgamma2}
	Let us fix $\nu \in (0,1)$  such that $\rho := (1-\nu)\alpha < 1 + \alpha-\beta-\delta$ and $\rho<\delta$.
	By \eqref{eq:inttpH} and \eqref{eq:dInttpHgamma2}, we have for $\delta+\alpha\leq \gamma\leq \beta$,
	\begin{align}
		\int_{D} \tp(1,x,y) H_{\gamma}(y) dy  &=\int_0^1\int_{D}\int_{D} \tp(s,x,z)\tp(1-s,z,y) H_{\gamma}(y)\, dy\,dz\,ds  \label{eq3:inttp} \\
		&\le C \int_0^1\int_{D} \tp(s,x,z) H_{\gamma+\rho}(z) \,dz\,ds  \le C H_{\gamma-\nu\alpha}(x). \notag
	\end{align}
	We choose $n \in \mathbb{N}$ so that $(n-1)\nu +1 \leq  \xi < n \nu +1$.  By scaling property of $\tpw$, \eqref{eq3:inttp} and \eqref{eq1.5:inttp},
	\begin{align*}
		\int_{D} \tpw(1,x,y) H_{\beta}(y) dy &=  \int_{D} \tpw(n + 1, (n + 1)^{1/\alpha}x,y) H_{\beta}((n + 1)^{1/\alpha}y) dy \\
		& \leq c\int_{D} \tpw(n+1,(n + 1)^{1/\alpha}x,y) H_{\beta}(y) dy \\
		& = \int_{D}\ldots\int_{D} \tpw(1,(n + 1)^{1/\alpha}x,z_1) \ldots \tpw(1,z_n,y) H_{\beta}(y)\, dy\, dz_n\ldots dz_1 \\
		& \le C^n\int_{D} \tpw(1,(n + 1)^{1/\alpha}x,z_1) H_{\beta-n\nu\alpha}(z_1)\, dz_1 \\
		& = C^n\int_{D} \tpw(1,(n + 1)^{1/\alpha}x,z_1) H_{\delta +(\xi-n\nu)\alpha}(z_1)\, dz_1 \\
		&\le C^{n+1} H(x),
	\end{align*}
		as needed.
	\end{proof}
	
	\begin{lemma}\label{lemma:tpxSmallySmallInequality2} 
		For each $\rho \in (0,\delta)$ there is  $C_\rho$ such that
		\begin{align*}
			\tpw(1,x,y) \le C_\rho H(x) y^{\alpha - \delta - \rho}, \qquad 0 < x, y \le 2(2\kappa)^{1/\alpha}.
		\end{align*}
	\end{lemma}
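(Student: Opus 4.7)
The plan is to start from the three-term decomposition \eqref{eq:tpxSmallySmallInequalities1} and to bound each of the three pieces by $C_\rho H(x)\,y^{\alpha-\delta-\rho}$ on the range $x,y\in(0,R]$ with $R=2(2\kappa)^{1/\alpha}$.

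For the boundary term $c\,H(x)\eta_1(y)$ I would use \eqref{eq:etaApprox}: on the bounded range $y\le R$ the first branch of the minimum dominates and gives $\eta_1(y)\lesssim y^{\alpha}$. Since $\alpha>\alpha-\delta-\rho$ and $y\le R$, this yields $\eta_1(y)\le R^{\delta+\rho}y^{\alpha-\delta-\rho}$, so the first term already has the required form.

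For the two double integrals the strategy is to plug in the pointwise bound $\tpw(1-s,x,z)\le C_\gamma H(x)z^{\gamma-1}$ from Lemma \ref{lemma:tpxSmallySmallInequalities2} with an auxiliary $\gamma\in(0,\delta)$ that will be chosen close to $\delta$ at the end (the bound applies at times $1-s\in[1/2,1]$ up to a harmless rescaling). Because $q(z)=\kappa z^{-\alpha}$ is singular at zero and the naive product $z^{\gamma-1-\alpha}$ is not integrable there when $\gamma\le\alpha$, I would split each $z$-integral at $z=x$ and, on the region $z<x$, replace the direct bound by the quasi-symmetry bound from \eqref{eq:tpwQuSym}:
\[
\tpw(1-s,x,z)=\tpw(1-s,z,x)\frac{z^{\alpha}}{x^{\alpha}}\le C_\gamma(z^{\alpha}+z^{\alpha-\delta})\,x^{\gamma-1-\alpha},
\]
which after multiplication by $z^{-\alpha}$ becomes $(1+z^{-\delta})x^{\gamma-1-\alpha}$ and is locally integrable at zero since $\delta<1$. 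For the second term the $s$-integral is then carried out by splitting at the crossover $s=y^{\alpha}$ and using the piecewise asymptotics of $\eta_s(y)$ from \eqref{eq:etaApprox}; for the third term one uses the lower limit $v(y)\approx y^\alpha$ together with the estimate $\pw(s,z,y)\lesssim s^{1/2}/y^{1+\alpha/2}$ coming from \eqref{eq:pwUsefullApprox} in the regime $2z>y$. In each case the resulting powers of $y$ combine to give $y^{\alpha-\delta-\rho}$ once $\gamma$ is chosen sufficiently close to $\delta$ (for example $\gamma=\delta-\rho/2$).

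The main obstacle is the simultaneous control of the $x$- and $y$-exponents after the splits: the quasi-symmetry step transfers singularity from $z$ into $x$, producing an $x^{\gamma-1-\alpha}$ factor that must, after the $s$- and $z$-integrations, be absorbed into $H(x)\sim x^{-\delta}$, and the $y$-factors coming from $\eta_s(y)$, $\pw(s,z,y)$ and the split at $s=y^{\alpha}$ must collapse to at least $y^{\alpha-\delta-\rho}$. The bookkeeping of these competing contributions is the delicate part of the argument and is what forces the choice of $\gamma$ in terms of $\rho$.
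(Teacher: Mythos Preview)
Your plan has a genuine gap in the $x$-bookkeeping, and it cannot be closed with the tools you list. After inserting the pointwise bound from Lemma~\ref{lemma:tpxSmallySmallInequalities2} into the second term of \eqref{eq:tpxSmallySmallInequalities1} and multiplying by $q(z)=\kappa z^{-\alpha}$, you are left with $H(x)\int_x^{y/2} z^{\gamma-1-\alpha}\,dz$ on the piece $z>x$ and, via quasi-symmetry, with $x^{\gamma-1-\alpha}\int_0^x(1+z^{-\delta})\,dz\approx x^{\gamma-\delta-\alpha}$ on the piece $z<x$. In either case the resulting $x$-singularity is of order $x^{\gamma-\delta-\alpha}$, which is strictly worse than $H(x)\approx x^{-\delta}$ whenever $\gamma<\alpha$; since $\gamma<\delta\le 1/2$ while $\alpha$ ranges over $(0,2)$, this obstruction is unavoidable for $\alpha>\delta$. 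No choice of $\gamma$ close to $\delta$ rescues this, and the $s$- and $y$-integrations do not touch the $x$-exponent at all, so the ``delicate bookkeeping'' you allude to cannot succeed.

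The paper's proof avoids this by not passing through the pointwise Lemma~\ref{lemma:tpxSmallySmallInequalities2} at all: the key input is the \emph{integral} bound of Lemma~\ref{lemma:inttpH(beta)}, namely $\int_D \tpw(t,x,z)z^{-\beta}\,dz\le C_\beta t^{-\beta/\alpha}H(t^{-1/\alpha}x)$ for every $\beta<1+\alpha-\delta$. Applied with $\beta=1-\delta-\rho+\alpha$ this gives directly $\int_{z<2y}\tpw(1-s,x,z)q(z)\,dz\le cH(x)\,y^{1-\delta-\rho}$, after which $\int_0^\infty\eta_s(y)\,ds\approx y^{\alpha-1}$ produces the exponent $\alpha-\delta-\rho$. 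Lemma~\ref{lemma:inttpH(beta)} in turn rests on the exact identity of Theorem~\ref{thm:tpInt}, not on any pointwise estimate; this is precisely the missing idea in your outline. A secondary issue: in the third term you invoke \eqref{eq:pwUsefullApprox} in the regime $2z>y$, but that inequality is stated only for $y>2z$; the paper instead splits $2z>y$ into $y/2<z<2y$ and $z>2y$ and uses different bounds for $\pw(s,z,y)$ in each, again feeding into Lemma~\ref{lemma:inttpH(beta)}.
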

	
	\begin{proof}
		We will apply Lemma \ref{lemma:tpxSmallySmallInequalities1}.
		
		By Lemma \ref{lemma:inttpH(beta)}, for $s \in (0,1/2)$ we have
		\begin{align} \label{eq:tpxSmallySmallInequality2Eq1} 
			\int_{z < 2y} \tpw(1-s,x,z) q(z) dz  
			\le \int_{D} \tpw(1-s,x,z) \bigg(\frac{2y}{z}\bigg)^{1 -\delta - \rho} q(z) dz
			\le c y^{1-\delta-\rho} H(x).
		\end{align}
		Therefore by \eqref{eq:etaApprox},
		\begin{align*} 
			\int_0^{1/2} \int_{2z < y} \tpw(1-s,x,z) q(z) \eta_s(y) dzds \le c H(x) y^{1-\delta - \rho} \int_0^{\infty} \eta_s(y) ds   
			\le  c H(x) y^{ \alpha -\delta - \rho} .
		\end{align*}
		By \eqref{eq:pDApprox}, $\pw(s,z,y) \leq c y^{\alpha}$ for $s \in (1/2, 1)$. Hence by \eqref{eq:tpRIntApprox3}
		\begin{equation*}
			\int^1_{1/2}\int_{2z>y} \tpw(1-s,x,z)q(z)\pw(s,z,y)dzds \leq cH(x)y^{\alpha}.
		\end{equation*}
		Recall that $v(y)=y^\alpha/(2^{\alpha+1}\kappa)$. Let $v(y) < 1/2$. Since $(s^{-1/\alpha} \wedge sz^{-1-\alpha}) \leq s^{(\beta - 1)/\alpha}z^{-\beta}$ for $z,s > 0,~\beta \in [0, 1 + \alpha]$ and by \eqref{eq:pDApprox} we have that $\pw(s, z, y) \leq y^{\alpha}s^{-(\alpha + \delta + \rho)/\alpha}z^{1 - \delta - \rho}$ for $z > 2y$ and $\beta = 1 - \delta - \rho$.  Hence by \eqref{eq:pDApprox}, \eqref{eq:tpxSmallySmallInequality2Eq1} and Lemma \ref{lemma:inttpH(beta)}, 
		\begin{align*}
			&\int_{v(y)}^{1/2} \int_{2z > y} \tpw(1-s,x,z)q(z)\pw(s,z,y)dzds \\
			&\le  c\int_{v(y)}^{1/2} \int_{z < 2y} \tpw(1-s,x,z) q(z) y^{\alpha} s^{-(1 + \alpha)/\alpha} dzds \\
			&+ c \int_{v(y)}^{1/2} \int_{z > 2y} \tpw(1-s,x,z) q(z) \frac{y^{\alpha} s^{-(\alpha + \delta + \rho)/\alpha}}{z^{1-\delta-\rho}} dzds \\ 
			&\le c H(x) y^{\alpha + 1 - \delta - \rho} \int_{v(y)}^{1/2} s^{-(1 + \alpha)/\alpha} ds + cH(x)y^{\alpha} \int_{v(y)}^{1/2} s^{-(\alpha + \delta + \rho)/\alpha} ds 
			\le c H(x) y^{\alpha - \delta - \rho}.
		\end{align*}
		Since $H(x)\eta_1(y) \le cH(x)$, we obtain
		$\tpw(1,x,y) \le CH(x)y^{\alpha - \delta - \rho}$. 
	\end{proof}

	\begin{lemma}\label{lemma:tpUpperBound}
		There exists a constant $C$ such that
		\begin{align*}
			\tp(t,x,y) \le C p_D(t,x,y) H(t,x)H(t,y), \qquad x,y \in D.
		\end{align*}
	\end{lemma}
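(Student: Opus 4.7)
By the scaling property of both sides, the bound is equivalent to the $t=1$ version of the $\tpw$-inequality, namely $\tpw(1,x,y) \le C \pw(1,x,y) H(x) H(y)$; the passage from this to the stated bound for $\tp$ and $p_D$ is a straightforward application of the identity $\tp = \tpw \cdot x^{\alpha/2}/y^{\alpha/2}$ (and the analogous one for $p_D$). Fix $R = 2(2\kappa)^{1/\alpha}$, so that $q(R) = 2^{-\alpha-1} < 1/2$ and $H(u) \le 1 + R^{-\delta}$ uniformly for $u \ge R$.

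The plan is to split into three cases: (i) $y \ge R$; (ii) $x \ge R > y$; (iii) $x, y < R$. Case (i) is immediate: Lemma \ref{lemma:tpxSmallyLargeIntApprox} (when $x \le R$) or Lemma \ref{lemma:tpxLargeyLargeIntApprox} (when $x \ge R$) yields $\tpw(1,x,y) \le C \pw(1,x,y) H(x)$, and the factor $H(y) \ge 1$ supplies the rest. Case (ii) follows from (i) by the quasi-symmetries \eqref{eq:tpwQuSym} and \eqref{eq:pwQuSym}: one writes $\tpw(1,x,y) = \tpw(1,y,x)(y/x)^\alpha$ and applies (i) to $\tpw(1,y,x)$, then uses $\pw(1,y,x)(y/x)^\alpha = \pw(1,x,y)$ to match the target.

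The main case is (iii). I would apply Lemma \ref{lemma:tpIntApprox2} with this $R$ (so that the coefficient $1 - q(R) \ge 1/2$ absorbs the $z \ge R$ contribution), obtaining
\begin{equation*}
\tpw(1,x,y) \le 2 \pw(1,x,y) + 2 \int_0^1 \int_{z < R} \tpw(1-s, x, z)\, q(z)\, \pw(s, z, y)\, dz\, ds,
\end{equation*}
so it remains to bound the integral by $C \pw(1,x,y) H(x) H(y)$. I would split the $z$-integration at $z = y/2$: on $\{2z \le y\}$ use $\pw(s, z, y) \le c\, \eta_s(y)$, which follows from \eqref{eq:pwUsefullApprox} and \eqref{eq:etaApprox}; on $\{2z > y\}$ apply the quasi-symmetry \eqref{eq:pwQuSym} to move the small argument into first position so that \eqref{eq:pwUsefullApprox} applies. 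I would then split the time integral at $s = v(y)$ and $s = 1/2$, paralleling the partitioning used in Lemma \ref{lemma:tpxSmallySmallInequalities1}. In each subregion, the $z$-integral $\int_{z<R} \tpw(1-s,x,z)\, q(z)\, w(z)\, dz$ is controlled by Lemma \ref{lemma:inttpH(beta)} with the appropriate $\beta$, and a final check using \eqref{eq:pDApprox} shows each resulting piece is dominated by $\pw(1,x,y) H(x) H(y)$.

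The principal obstacle is the sharp $y$-dependence in the corner regime $x, y \to 0^+$: the target $\pw(1,x,y)H(x)H(y) \approx x^{-\delta} y^{\alpha-\delta}$ is strictly stronger than the pointwise bound $\tpw(1,x,y) \le C_\rho H(x) y^{\alpha-\delta-\rho}$ of Lemma \ref{lemma:tpxSmallySmallInequality2}, which is off by $y^\rho$. The gain is produced by keeping the $y$-dependence inside the factor $\pw(s,z,y)$ throughout the Duhamel analysis rather than absorbing it into a pointwise bound; integrating $\eta_s(y)$ in $s$, together with the endpoint behavior of Lemma \ref{lemma:inttpH(beta)} (taking $\beta$ arbitrarily close to $1+\alpha-\delta$), recovers the missing factor of $y^\rho$ and delivers the sharp exponent $y^{\alpha-\delta}$.
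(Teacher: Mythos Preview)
Your handling of cases (i) and (ii) is correct and matches the paper. The gap is in case (iii). What you are proposing there is precisely the Duhamel-based analysis that already underlies Lemma~\ref{lemma:tpxSmallySmallInequalities1} and Lemma~\ref{lemma:tpxSmallySmallInequality2}, and that analysis genuinely cannot reach the sharp exponent $y^{\alpha-\delta}$. Concretely: on the region $\{2z\le y,\; s\le 1/2\}$, after bounding $\pw(s,z,y)\le c\,\eta_s(y)$ and using $\int_0^\infty \eta_s(y)\,ds\approx y^{\alpha-1}$, you need $\int_{2z\le y}\tpw(1-s,x,z)z^{-\alpha}\,dz\le C\,H(x)\,y^{1-\delta}$. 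Obtaining this via $(y/z)^{1-\delta}z^{-\alpha}\le y^{1-\delta}z^{-(1+\alpha-\delta)}$ requires Lemma~\ref{lemma:inttpH(beta)} at $\beta=1+\alpha-\delta$, which is the \emph{excluded} endpoint. The constant $C_\beta$ there blows up as $\beta\uparrow 1+\alpha-\delta$: tracing the proof back to \eqref{eq:estbgd}, the relevant bound carries the factor $(\kappa_{\beta-\alpha}-\kappa)^{-1}$, and $\kappa_{1-\delta}=\kappa_\delta=\kappa$ by the symmetry $\kappa_\beta=\kappa_{1-\beta}$. So ``taking $\beta$ arbitrarily close to $1+\alpha-\delta$'' does not recover the missing $y^\rho$; it trades it for an exploding constant. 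The same obstruction appears on $\{2z>y,\; v(y)\le s\le 1/2\}$.

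The paper circumvents this not by sharpening the Duhamel analysis but by abandoning it in case (iii): it uses Chapman--Kolmogorov for $\tp$ itself, writing $\tp(1,x,y)=c\int_D \tp(1,2^{1/\alpha}x,z)\,\tp(1,z,2^{1/\alpha}y)\,dz$, and then applies the \emph{sub-sharp} bound of Lemma~\ref{lemma:tpxSmallySmallInequality2} (together with the already-established cases for $z>R$) to each factor. The point is that this moves the $\rho$-deficit from the external variable $y$ to the integration variable $z$: one ends up with $\int_{z<R} z^{\alpha-2\delta-2\rho}\,dz$, which is harmless for small $\rho$, while the $x$- and $y$-dependence comes out as $x^{\alpha/2}H(x)\cdot y^{\alpha/2}H(y)\approx p_D(1,x,y)H(x)H(y)$ with the correct exponents. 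Your Duhamel scheme keeps one leg as $\pw$, so the deficit stays attached to $y$ and cannot be integrated away.
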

	
	\begin{proof}
		Let $R=2(2\kappa)^{1/\alpha}$. By Lemma \ref{lemma:tpxSmallyLargeIntApprox} and Lemma \ref{lemma:tpxLargeyLargeIntApprox},
		\begin{equation} \label{eq:tpUpperBoundEq1}
			\tp(1, x, y) \leq Cp_D  (1, x, y)H(x)H(y), \qquad x \in D,~y > R.
		\end{equation}
		Due to symmetry,  
		\begin{equation} \label{eq:tpUpperBoundEq2}
			\tp(1, x, y) \leq Cp_D(1, x, y)H(x)H(y), \qquad x >R,~y \in D.
		\end{equation}
		By scaling property of $\tp$ we only have to show that inequality \eqref{eq:tpUpperBoundEq1} holds for $0 < x, y < R$. Note that, by \eqref{eq:tpUpperBoundEq1}, \eqref{eq:tpUpperBoundEq2} and Chapman-Kolmogorov equation
		\begin{align*}
			\tp(1,x,y) &= c\int_D \tp(1, x2^{1/\alpha}, z)\tp(1, z, y2^{1/\alpha}) dz \\
		 	& \leq c\int_{z < R} \tp(1, x2^{1/\alpha}, z)\tp(1, z, y2^{1/\alpha}) dz \\
		 	&+ c\int_{z > R} p_D(1, x2^{1/\alpha}, z) H(x) p_D(1, z, y2^{1/\alpha}) H(y) dz \\
		 	&\leq c\int_{z < R} \tp(1, x2^{1/\alpha}, z)\tp(1, z, y2^{1/\alpha}) dz \\ 
		 	&+ cp_D(2, x2^{1/\alpha}, y2^{1/\alpha})H(x)H(y), \qquad 0 < x, y < R,
		\end{align*}
		therefore it suffices to estimate the integral in the last inequality above. 
		
		Firstly we consider $2\kappa^{1/\alpha} < x, y < R.$ By \eqref{eq:tpUpperBoundEq1} and \eqref{eq:tpUpperBoundEq2}
		\begin{align*}
			&\int_{z < R} \tp(1, x2^{1/\alpha}, z)\tp(1, z, y2^{1/\alpha}) dz \\
			& \leq cH(x)H(y)\int_{z < R} p_D(1, x2^{1/\alpha}, z) p_D(1, z, y2^{1/\alpha})H^2(z)dz \\
			&\leq c p_D(1, x, y) H(x)H(y) \int_{z < R} z^{\alpha} H^2(z) dz \\
			&\leq c p_D(1, x, y) H(x)H(y),
		\end{align*}
		where the second inequality follows from the fact that $p_D(1, x2^{1/\alpha}, z) p_D(1, z, y2^{1/\alpha}) < cz^{\alpha}$ for some positive $c$ and that $p_D(1, x, y) > c_2$ for some $c_2 > 0$ and $2\kappa^{1/\alpha} < x, y < R.$
		
		Now we let $x, y < 2\kappa^{1/\alpha}.$ Fix $\rho \in (0, 1/2 - \delta).$ By Lemma \ref{lemma:tpxSmallySmallInequality2}
		\begin{align*}
			&\int_{z < R} \tp(1, x2^{1/\alpha}, z)\tp(1, z, y2^{1/\alpha}) dz \\
			&\leq c \int_{z < R} H(x)z^{\alpha/2 - \delta - \rho}x^{\alpha/2} H(y) z^{\alpha/2 - \delta - \rho} y^{\alpha/2} dz \\ 
			&\leq cx^{\alpha/2}y^{\alpha/2}H(x)H(y) \leq p_D(1,x,y) H(x) H(y).
		\end{align*}
		As a last case we consider $x < 2\kappa^{1/\alpha} < y < R$, since the analogous case, $y < 2\kappa^{1/\alpha} < x < R$ follows from symmetry. By Lemma \ref{lemma:tpxSmallySmallInequality2} and \eqref{eq:tpUpperBoundEq1} we have
		\begin{align*}
			&\int_{z < R} \tp(1, x2^{1/\alpha}, z)\tp(1, z, y2^{1/\alpha}) dz \\
			&\leq c \int_{z < R} H(x)z^{\alpha/2 - \delta - \rho}x^{\alpha/2} p_D(1,z, y2^{1/\alpha}) H(y) dz \\ 
			&\leq cx^{\alpha/2}H(x)H(y) \int_{z < R} z^{\alpha - 2\delta - \rho} dz \leq cx^{\alpha/2}H(x)H(y).
		\end{align*}
		Since $c_1 \leq (1 \wedge y^{\alpha/2}) p(1,x,y)$ we get $cx^{\alpha/2}H(x)H(y) \leq c p_D(1,x,y) H(x) H(y).$
	\end{proof}
	
	\subsection{Lower bound of heat kernel}\label{sec:Lb}
	By scaling property is suffices to prove that for all $x, y \in D$  $\tp(3,x ,y) \geq cp_D(3,x,y)H(3, x)H(3, y).$  
	Since $H(3,z) \leq 3^{\delta/\alpha + 1}$ for $z \geq 1$, we observe that 
	\begin{align} \label{eq:lowerBound1}
		\tp(3, x, y) \geq c p_D(3,x,y) H(3, x) H(3, y), \qquad x,y \geq 1.
	\end{align}
	Due to symmetry of $\tp$ it suffices to prove \eqref{eq:lowerBound1} for $x < 1$ and $y \in D$. \newline
	Let $\mu(dy) = H^2(y)dy$ and $q(t,x,y) = \tpw(t,x,y)/(H(x)H(y)).$ By \eqref{eq:tpCK} we have, 
	\begin{equation*} \label{eq:qCK}
		q(t + s, x, y) = \int_D q(t, x, z)q(s, z, y) \mu(dz).
	\end{equation*}
	Next by \eqref{eq:pwIntegral} and \eqref{eq:tpIntApprox}
	\begin{equation*}
		1 \leq \int_D q(1,x,y)\mu(dy) \leq M + 1.
	\end{equation*}
	Therefore by Lemma \ref{lemma:tpUpperBound} and \eqref{eq:pwUsefullApprox}, there is $R > 2$ such that for $x \in (0,1)$
	\begin{equation*}
		\int_R^{\infty} q(1,x,y) \mu(dy) \leq c \int_R^{\infty} \pw(1,x,y) dy \leq c\int_R^{\infty} y^{-1-\alpha/2} dy \leq \frac{1}{4}. 
	\end{equation*}
	Furthermore there is $r \in (0, 1)$ such that for all $x \in D$, 
	\begin{align*}
		\int_0^r q(1,x,y) \mu(dy) &\leq c \int_0^r \pw(1,x,y) H^2(y) dy \\
		&\leq c \int_0^r y^{\alpha} H^2(y) dy \leq \frac{1}{4}.
	\end{align*} 
	Hence for $x \in (0,1),$
	\begin{equation*}
		\int_r^{R} q(1,x,y) \mu(dy) \geq \frac{1}{2}.
	\end{equation*} 
	For $x \in (0,1)$ and $y \geq r$ we get
	\begin{align*}
		q(2, x, y) &\geq \int_r^R q(1, x, z) q(1, z, y) \mu(dz) \\
		&\geq c \int_r^R q(1, x, z) \frac{\pw(1,z,y)}{H^2(r)} \mu(dz) \\
		&\geq cy^{\alpha/2}(R + y)^{-1 - \alpha} \int_r^R q(1,x,z) \mu(dz) 
		\geq \frac{c}{2} y^{\alpha/2}(R + y)^{-1 - \alpha},
	\end{align*}
	where $H(r) = H(z)$ for $|z| = r.$ Hence by \eqref{eq:pDApprox},
	\begin{equation} \label{eq:lowerBound2}
		q(2,x,y) \geq \pw(2,x,y).
	\end{equation}
	In the same way we obtain that 
	\begin{align} \label{eq:lowerBound3}
		q(3,x,y) \geq \pw(3,x,y) \qquad x \in (0,1), y \geq r.
	\end{align}
	Let $x, y < 1$. By \eqref{eq:lowerBound1} and \eqref{eq:lowerBound2}
	\begin{align} \label{eq:lowerBound4}
	\begin{split}
		q(3,x,y) &\geq c\int_r^R q(1, x, z)q(2, z, y) \mu(dz) \\
		&= c\int_r^R q(1, x, z)q(2, y, z) \bigg( \frac{y}{z} \bigg)^{\alpha} \mu(dz)  
		\geq c \int_r^R q(1,x,z) y^{\alpha} \mu(dz) \geq c \pw(3,x,y).
	\end{split}
	\end{align}
	By scaling property of $\tp$, \eqref{eq:lowerBound1}, \eqref{eq:lowerBound3} and \eqref{eq:lowerBound4} we have that
	\begin{align} \label{eq:tpLowerBound}
		\tp(t, x, y) \geq c p_D(t,x,y) H(t, x) H(t, y), \qquad x,y \in D,~t > 0.
	\end{align}
	
	\subsection{Continuity of $\tp(t,x,y)$}\label{sec:JC}
	The proof of joint continuity of $\tp(t,x,y)$ is based on proof given in [\cite{MR3933622}, Subsection 4.3]. 
	For $\gamma \in (-\alpha, 1 + \alpha/2)$ we have that (see. \cite{jakubowski2022groundstate}, Lemma 3.1):
	\begin{equation} \label{eq:intpDy(-gamma)Approx}
		\int_D p_D(t,x,y) y^{- \gamma} dy \leq cx^{- \gamma} \left(1 \vee tx^{-\alpha} \right)^{-1/2 - \gamma/\alpha}, \qquad t>0, x,y \in D.		
	\end{equation} 
	\begin{lemma}
		For $x,y \in D$ and $\epsilon < 1/2$ we have that: 
		\begin{equation} \label{eq:Int(tpqpD)Approx1}
			\int_0^{\epsilon} \int_D \tp(1-s,x,z)q(z)p_D(s,z,y) dzds \leq c \epsilon H(x)H(y)y^{-\alpha/2}.
		\end{equation}
	\end{lemma}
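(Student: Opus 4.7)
The plan is to insert the pointwise upper bound from Lemma~\ref{lemma:tpUpperBound} into the integrand, exploit the fact that $1-s \in (1/2, 1)$ to neutralize the $s$-dependence in $\tp(1-s,x,z)$, and then reduce the problem to a $z$-integral controlled by the moment estimate \eqref{eq:intpDy(-gamma)Approx}.

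First, since $s \in (0, \epsilon) \subset (0, 1/2)$, we have $H(1-s,\cdot) \approx H(\cdot)$, so Lemma~\ref{lemma:tpUpperBound} gives $\tp(1-s,x,z) \le c\, p_D(1-s,x,z)\, H(x)\, H(z)$. Combining \eqref{eq:pDApprox} with the uniform sup bound $p(1-s,x,z) \le (1-s)^{-1/\alpha} \le 2^{1/\alpha}$ further yields $p_D(1-s,x,z) \le c\,(1 \wedge x^{\alpha/2})(1 \wedge z^{\alpha/2})$. After substituting these bounds, the desired integral is dominated by
$c\,(1 \wedge x^{\alpha/2})\, H(x) \int_0^\epsilon \int_D F(z)\, p_D(s,z,y)\, dz\, ds$, where $F(z) := (1 \wedge z^{\alpha/2})\, H(z)\, q(z)$. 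A short case analysis for $z \le 1$ and $z > 1$ (using $z^{-\alpha} \le z^{-\alpha/2}$ when $z > 1$, and writing out the factors explicitly when $z \le 1$) yields the uniform bound $F(z) \le c\,(z^{-\alpha/2-\delta} + z^{-\alpha/2})$ on all of $D$.

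Both exponents $\gamma \in \{\alpha/2 + \delta,\, \alpha/2\}$ lie in the admissible range $(-\alpha,\, 1+\alpha/2)$ whenever $\delta \in (0, 1/2]$ and $\alpha \in (0, 2)$, so \eqref{eq:intpDy(-gamma)Approx} reduces the inner integral to $c\, y^{-\gamma}(1 \vee s y^{-\alpha})^{-1/2-\gamma/\alpha}$. I would then perform the $s$-integration on $(0, \epsilon)$ via the substitution $u = s y^{-\alpha}$, splitting at $u = 1$. The $\gamma = \alpha/2 + \delta$ case is routine: the decay exponent is $-1 - \delta/\alpha$, and the elementary identity $y^{\alpha/2-\delta} = y^\alpha\, y^{-\alpha/2-\delta} \le \epsilon\, y^{-\alpha/2-\delta}$ (valid in the subcase $y^\alpha \le \epsilon$) produces the clean bound $c\epsilon\, y^{-\alpha/2-\delta}$.

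The main obstacle is the $\gamma = \alpha/2$ case, where the decay exponent is exactly $-1$, so the $s$-integral produces a logarithmic factor of order $y^{\alpha/2}\log(\epsilon/y^\alpha)$ in the subcase $y^\alpha < \epsilon$. This is absorbed using the elementary inequality $u(1 - \log u) \le 1$ for $u \in (0, 1]$ (easily verified by monotonicity), applied with $u = y^\alpha/\epsilon$, giving the bound $c\epsilon\, y^{-\alpha/2}$. Summing the two contributions yields $c\epsilon\, H(y)\, y^{-\alpha/2}$, and using $(1 \wedge x^{\alpha/2}) \le 1$ completes the argument.
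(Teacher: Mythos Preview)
Your proof is correct and follows the same skeleton as the paper's: bound $\tp(1-s,x,z)$ via Lemma~\ref{lemma:tpUpperBound}, absorb $p_D(1-s,x,z)$ by $c\,z^{\alpha/2}$ using \eqref{eq:pDApprox}, and finish with the moment bound \eqref{eq:intpDy(-gamma)Approx} for the exponents $\gamma=\alpha/2$ and $\gamma=\alpha/2+\delta$.

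The only difference is that you work much harder than necessary in the $s$-integration. Since the exponent $-1/2-\gamma/\alpha$ is negative, one has $(1\vee sy^{-\alpha})^{-1/2-\gamma/\alpha}\le 1$, so \eqref{eq:intpDy(-gamma)Approx} already gives $\int_D p_D(s,y,z)z^{-\gamma}\,dz\le c\,y^{-\gamma}$ uniformly in $s$; integrating over $(0,\epsilon)$ then yields $c\epsilon y^{-\gamma}$ with no substitution, no case split, and no logarithm to absorb. This is exactly what the paper does, and it makes your entire last paragraph (the $u=sy^{-\alpha}$ substitution and the $u(1-\log u)\le 1$ trick) superfluous.
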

	\begin{proof}
		Let $\epsilon < 2^{-1} $. By Lemma \ref{lemma:tpUpperBound}, \eqref{eq:pDApprox} and inequality \eqref{eq:intpDy(-gamma)Approx} we have, 
		\begin{align*}
			&\int_0^{\epsilon} \int_D \tp(1-s,x,z)q(z)p_D(s,z,y) dzds \\
			&\leq c\int_0^{\epsilon} \int_D p_D(1-s,x,z)H(x)H(z)q(z)p_D(s,z,y) dzds \\
			&\leq cH(x) \int_0^{\epsilon} \int_D p_D(s,y,z)z^{\alpha/2} H(z)q(z) dzds \\
			&\leq c\epsilon H(x) y^{-\alpha/2}\left(1 + y^{-\delta} \right) = c\epsilon H(x)H(y)y^{-\alpha/2}.
		\end{align*}
	\end{proof}
	
	Let $x, y \in D$. In a similar manner, we get,
	\begin{align} \label{eq:Int(tpqpD)Approx2}
		&\int_{1 - \epsilon}^{1} \int_D \tp(1-s,x,z)q(z)p_D(s,z,y) dzds \leq c\epsilon H^2(x)x^{-\alpha/2}, \qquad x, y \in D.
	\end{align}

	\begin{lemma}\label{lemma:tpConti1}
		For every $x \in D$, the function $D \ni y\mapsto \tp(1,x,\cdot)$ is continuous. 
	\end{lemma}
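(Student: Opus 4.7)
Fix $x\in D$ and $y_0\in D$, and choose a compact interval $K=[a,b]\subset D$ with $y_0$ in its interior. The goal is to show $y\mapsto \tp(1,x,y)$ is continuous at $y_0$, and uniform control on $K$ will suffice. The starting point is Duhamel's formula \eqref{eq:Df2},
\[
\tp(1,x,y)=p_D(1,x,y)+\int_0^1\int_D \tp(1-s,x,z)\,q(z)\,p_D(s,z,y)\,dz\,ds.
\]
Joint continuity of $p_D$ handles the first term, so everything reduces to continuity of the integral in $y$.

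For a small parameter $\varepsilon\in(0,1/2)$, I split the $s$-integral into $\int_0^\varepsilon+\int_\varepsilon^{1-\varepsilon}+\int_{1-\varepsilon}^1$. Estimate \eqref{eq:Int(tpqpD)Approx1} bounds the first piece by $c\varepsilon H(x)\sup_{y\in K}(H(y)y^{-\alpha/2})$, which is finite because $H(y)y^{-\alpha/2}$ is continuous and $K$ is compact inside $D$; estimate \eqref{eq:Int(tpqpD)Approx2} bounds the third piece by $c\varepsilon H^2(x)x^{-\alpha/2}$, independent of $y$ altogether. Both therefore go to $0$ uniformly on $K$ as $\varepsilon\to 0$, and can be made smaller than any prescribed $\eta>0$ by choosing $\varepsilon=\varepsilon(\eta)$ small.

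For the middle piece I plan to apply dominated convergence. Pointwise convergence of the integrand as $y\to y_0$ is immediate from joint continuity of $p_D$. To produce a $y$-uniform integrable dominator on $(s,z)\in(\varepsilon,1-\varepsilon)\times D$, I use \eqref{eq:pDApprox} together with $s\in(\varepsilon,1-\varepsilon)$ and $y\in K$ to obtain $p_D(s,z,y)\le C_{\varepsilon,K}\,g(z)$ where $g(z)\lesssim(z^{\alpha/2}\wedge 1)\wedge z^{-1-\alpha}$. Then $q(z)g(z)$ is bounded by a constant times $z^{-\alpha/2}$ near $0$ and $z^{-1-2\alpha}$ at infinity, and is bounded on any compact subset of $(0,\infty)$. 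Integrability of $\tp(1-s,x,z)\,q(z)g(z)$ in $(s,z)$ follows from Lemma~\ref{lem:iiptconv} applied with any $\beta>0$ small (using the comparison $z^{-\alpha/2}\le z^{-\alpha/2-\beta}$ on $z\le 1$) together with Lemma~\ref{lemma:tpUpperBound} for $z$ large, where the tail of $\tp$ decays like that of $p_D$. Dominated convergence then yields continuity of the middle piece in $y$.

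The main (mild) obstacle is the construction of this uniform dominator: the singularity $q(z)\sim z^{-\alpha}$ at the boundary must be reconciled with the boundary vanishing $p_D(s,z,y)\sim z^{\alpha/2}$, and the residual $z^{-\alpha/2}$ singularity is exactly at the edge of what Lemma~\ref{lem:iiptconv} covers, so a small auxiliary exponent $\beta>0$ is needed. Once the three pieces are assembled and $\varepsilon\to 0$, continuity at $y_0$ follows, and since $y_0\in D$ was arbitrary the lemma is proved.
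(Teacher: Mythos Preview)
Your approach is correct and follows the same overall strategy as the paper's proof: Duhamel's formula, a time-splitting of the integral, \eqref{eq:Int(tpqpD)Approx1} to control the small-$s$ tail, and dominated convergence for the remainder. The differences are in execution. You use a three-way split $\int_0^\varepsilon+\int_\varepsilon^{1-\varepsilon}+\int_{1-\varepsilon}^1$ and invoke \eqref{eq:Int(tpqpD)Approx2} for the third piece, whereas the paper gets by with a two-way split $\int_0^\varepsilon+\int_\varepsilon^1$: the region $s\in(\varepsilon,1)$ needs no further cutting because once $s$ is bounded away from $0$, the estimate \eqref{eq:pDApprox} gives the uniform comparability $p_D(s,z,w)\approx p_D(s,z,y)$ for $w$ close to $y$ and \emph{all} $z\in D$, so $c\,\tp(1-s,x,z)q(z)p_D(s,z,y)$ itself serves as the dominator, and its integral over $(\varepsilon,1)\times D$ is finite simply because $\tp(1,x,y)<\infty$ by Lemma~\ref{lemma:tpUpperBound}. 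Your explicit $y$-free majorant $g(z)$ and the appeal to Lemma~\ref{lem:iiptconv} (which, incidentally, is stated only for $\delta<1/2$; using Lemma~\ref{lemma:tpUpperBound} directly would be cleaner and covers $\delta=1/2$ as well) are therefore unnecessary, though not wrong.
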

	\begin{proof}
		Fix $x,y \in D$ and let $w \to y$. By \eqref{eq:Df2}
		\begin{align*}
			\tp(1,x,y) - \tp(1,x,w) &= p_D(1,x,y) - p_D(1,x,w) \\
			&+ \int_0^1 \int_{D} \tp(1-s,x,z) q(z) [p_D(s,z,y) - p_D(s, z,w)]\,dz\,ds. 
		\end{align*}
		Since $p_D$ is jointly continuous it suffices to prove that integral converges to $0$.
		Let $\epsilon < 2^{-1}$. Obviously $|p_D(s,z,y) - p_D(s, z,w)| < p_D(s,z,y) + p_D(s, z,w)$. Therefore by inequality \eqref{eq:Int(tpqpD)Approx1}
		\begin{align*} 
			\begin{split}
				&\int_0^\epsilon \int_{D} \tp(1-s,x,z) q(z) [p_D(s,z,y) - p_D(s,z,w)]\,dz\,ds \\
				&\le c \epsilon H(x) \left(H(y)y^{-\alpha} + H(w)w^{-\alpha} \right).
			\end{split}
		\end{align*}
		For $s \in (\epsilon,1)$ and $w$ closely to $y$ we have $p_D(s,z,w) \approx p_D(s,z,y)$. By the dominated convergence theorem and continuity of $p_D$, 
		\begin{align*}
			\int_\epsilon^1 \int_{D} \tp(1-s,x,z) q(z) [p_D(s,z,y) - p_D(s, z,w)]\,dz\,ds \to 0 \quad \mbox{ as } \quad w \to y.
		\end{align*}
		This proves the desired assertion. 
	\end{proof}
	
	\begin{lemma} \label{lemma:tpConti2}
		The function $\tp(t,x,y)$ is jointly continuous in $t > 0$ and $x,y \in D.$
	\end{lemma}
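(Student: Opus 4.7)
The plan is to reduce joint continuity on $(0,\infty)\times D\times D$ to joint continuity of the single slice $\tilde{p}(1,\cdot,\cdot)$ on $D\times D$, and to prove the latter via Duhamel's formula \eqref{eq:Df2} combined with the tail estimates \eqref{eq:Int(tpqpD)Approx1}--\eqref{eq:Int(tpqpD)Approx2} and dominated convergence on a middle interval.

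First I would invoke the scaling identity $\tilde{p}(t,x,y)=t^{-1/\alpha}\tilde{p}(1,xt^{-1/\alpha},yt^{-1/\alpha})$ together with continuity of the map $(t,x,y)\mapsto(t^{-1/\alpha},xt^{-1/\alpha},yt^{-1/\alpha})$ to reduce the problem to the case $t=1$. Next I would record that $\tilde{p}$ is symmetric, $\tilde{p}(t,x,y)=\tilde{p}(t,y,x)$, inherited inductively from the symmetry of $p_D$ and the definition of the iterates $p_n$. Combined with Lemma \ref{lemma:tpConti1} and scaling, this yields separate continuity $x\mapsto\tilde{p}(s,x,z)$ for every fixed $s>0$ and $z\in D$, which is the only input about $\tilde{p}$ that the dominated-convergence step below will use.

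For the main step, take $(x_n,y_n)\to(x,y)$ in $D\times D$. By Duhamel's formula,
\begin{align*}
\tilde{p}(1,x_n,y_n)-p_D(1,x_n,y_n)=\int_0^1\int_D \tilde{p}(1-s,x_n,z)\,q(z)\,p_D(s,z,y_n)\,dz\,ds,
\end{align*}
and the $p_D$ term converges by joint continuity of $p_D$. Fix a small $\epsilon\in(0,1/2)$ and split the outer integral at $\epsilon$ and $1-\epsilon$. By \eqref{eq:Int(tpqpD)Approx1} the $(0,\epsilon)$-piece is at most $c\epsilon\,H(x_n)H(y_n)y_n^{-\alpha/2}$, and by \eqref{eq:Int(tpqpD)Approx2} the $(1-\epsilon,1)$-piece is at most $c\epsilon\,H^2(x_n)x_n^{-\alpha/2}$; since $x,y>0$, both bounds are $O(\epsilon)$ uniformly in $n$ large.

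On the middle interval $(\epsilon,1-\epsilon)$ I would apply dominated convergence. Pointwise convergence of the integrand at fixed $(s,z)$ follows from the separate continuity of $\tilde{p}(1-s,\cdot,z)$ at $x$ noted above and the joint continuity of $p_D$. For the dominating function I would use Lemma \ref{lemma:tpUpperBound} to bound $\tilde{p}(1-s,x_n,z)\le C\,p_D(1-s,x_n,z)\,H(1-s,x_n)\,H(1-s,z)$, and then exploit that for $s\in(\epsilon,1-\epsilon)$ and $x_n$ in a small neighbourhood of $x>0$ the factors $H(1-s,x_n)$ and $p_D(1-s,x_n,z)$ are uniformly comparable to their limits, reducing the task to verifying finiteness of
\begin{align*}
\int_\epsilon^{1-\epsilon}\int_D p_D(1-s,x,z)\,H(1-s,z)\,q(z)\,p_D(s,z,y)\,dz\,ds.
\end{align*}
The step that I expect to require the most care is this finiteness check: near $z=0$ the singular factor $H(1-s,z)q(z)\approx z^{-\alpha-\delta}$ must be compensated by $p_D(1-s,x,z)p_D(s,z,y)\le c\,z^\alpha$ from \eqref{eq:pDApprox}, leaving an integrable singularity of order $z^{-\delta}$ since $\delta\in(0,1/2]$; the large-$z$ behavior is controlled by the $|x-y|^{-1-\alpha}$ decay of $p_D$. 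Letting $n\to\infty$ and then $\epsilon\to 0$ yields joint continuity of $\tilde{p}(1,\cdot,\cdot)$ on $D\times D$, and the scaling reduction concludes.
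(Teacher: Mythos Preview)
Your proposal is correct and follows essentially the same route as the paper: Duhamel, split at $\epsilon$ and $1-\epsilon$, use \eqref{eq:Int(tpqpD)Approx1}--\eqref{eq:Int(tpqpD)Approx2} on the tails, and dominated convergence on the middle with Lemma~\ref{lemma:tpConti1} supplying pointwise convergence. The paper packages the dominating function slightly differently, invoking both the upper bound (Lemma~\ref{lemma:tpUpperBound}) and the lower bound \eqref{eq:tpLowerBound} to get $\tilde{p}(1-s,\hat{x},z)\approx\tilde{p}(1-s,x,z)$ directly, whereas you use only the upper bound and then the uniform comparability of $p_D(1-s,x_n,z)$ to $p_D(1-s,x,z)$; both are valid and your finiteness check is correct. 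You are also more explicit than the paper about the scaling reduction to $t=1$ and about invoking symmetry of $\tilde{p}$ to turn Lemma~\ref{lemma:tpConti1} into continuity in the first spatial variable, points the paper leaves implicit.
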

	
	\begin{proof}
		Fix $x, y \in D$. Let $\hat{x} \to x$, $\hat{y} \to y$ and $\epsilon < 2^{-1}$. By \eqref{eq:Int(tpqpD)Approx1} and \eqref{eq:Int(tpqpD)Approx2},
		\begin{align*}
			&\int_0^1 \int_D |\tp(1-s,\hat{x}, z)p_D(s,z,\hat{y}) - \tp(1-s, x, z)p_D(s,z,y)|q(z)dzds \\
			&\leq \int_0^{\epsilon} \int_D [\tp(1-s,\hat{x}, z)p_D(s,z,\hat{y}) + \tp(1-s, x, z)p_D(s,z,y)]q(z)dzds \\
			&+ \int_{1-\epsilon}^{1} \int_D [\tp(1-s,\hat{x}, z)p_D(s,z,\hat{y}) + \tp(1-s, x, z)p_D(s,z,y)]q(z)dzds \\
			&+ \int_{\epsilon}^{1 - \epsilon} \int_D |\tp(1-s,\hat{x}, z)p_D(s,z,\hat{y}) - \tp(1-s, x, z)p_D(s,z,y)|q(z)dzds \\
			&\leq \epsilon C(x,y) + \int_{\epsilon}^{1 - \epsilon} \int_D |\tp(1-s,\hat{x}, z)p_D(s,z,\hat{y}) - \tp(1-s, x, z)p_D(s,z,y)|q(z)dzds,
		\end{align*}
		where $C(x,y)$ is a constant. Lemma \ref{lemma:tpUpperBound} and \eqref{eq:tpLowerBound} yields that $\tp(1-s, \hat{x}, z) \approx \tp(1-s, x, z)$ and $p_D(1-s, z, \hat{y}) \approx p_D(1-s, z, y)$ for $\epsilon < s < 1 - \epsilon$. By Lemma \ref{lemma:tpConti1} and the dominated convergence theorem, the last integral is arbitrarily small.   
	\end{proof}
	
	\begin{proof}[Proof of Theorem \ref{thm:main}] Lemmas \ref{lemma:tpUpperBound} and  \ref{eq:pDApprox}, inequality \eqref{eq:tpLowerBound} and Lemma \ref{lemma:tpConti2} yield demanded result. 
	\end{proof}
	
	\subsection{Blowup}\label{sec:BU} 
	As a corollary of Theorem \ref{thm:main} we can get the following result
	\begin{proposal}
		If $\kappa > \kappa_{1/2} = \kappa^{*}$, then $\tp(t,x,y) \equiv \infty.$ 
	\end{proposal}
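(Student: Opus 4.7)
The plan is to compare $\tp$ (the perturbation of $p_D$ by $\kappa z^{-\alpha}$ with $\kappa > \kappa^*$) against $\tp^*$, the perturbation of $p_D$ by $\kappa^* z^{-\alpha}$, i.e.\ the critical heat kernel whose sharp two-sided estimates are given by Theorem \ref{thm:main} with $\delta = 1/2$. Writing $\kappa = \kappa^* + \eta$ with $\eta > 0$, I invoke the standard identity for iterated Schr\"odinger perturbations of nonnegative kernels (cf.\ \cite{MR2457489}, already used in the $\delta = 1/2$ case of Theorem \ref{thm:tpInt} to reorganize $\tpw$ around $\tpw_\beta$): the perturbation series defining $\tp$ can be rewritten, term by term and with nonnegative entries, as the perturbation series of $\tp^*$ by the additional potential $\eta z^{-\alpha}$. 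Retaining only the first-order term of this reorganized series yields the pointwise lower bound
\begin{equation*}
\tp(t,x,y) \;\ge\; \eta \int_0^t \int_D \tp^*(s,x,z)\, z^{-\alpha}\, \tp^*(t-s,z,y)\, dz\, ds, \qquad t>0,\ x,y \in D,
\end{equation*}
so it suffices to show that the right-hand side is $+\infty$.

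For this, fix $t>0$ and $x,y \in D$ and set $z_0 := \tfrac12\min\{x,\,y,\,(t/3)^{1/\alpha}\}$. For $s \in [t/3, 2t/3]$ and $z \in (0, z_0)$, estimate \eqref{eq:mainThmEst} with $\delta = 1/2$ gives
\begin{equation*}
\tp^*(s,x,z) \;\ge\; c_1(x,t)\, z^{\alpha/2 - 1/2}, \qquad \tp^*(t-s, z, y) \;\ge\; c_2(y,t)\, z^{\alpha/2 - 1/2},
\end{equation*}
because on this range $z < s^{1/\alpha}$, $|x-z| \approx x$ and $|z-y|\approx y$, so that the factors $(1\wedge x s^{-1/\alpha})^{\alpha/2 - 1/2}$ and $(s^{-1/\alpha}\wedge s|x-z|^{-1-\alpha})$, together with their analogues in the second kernel, are all bounded below by positive constants depending only on $x, y, t$. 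Pairing these with $z^{-\alpha}$ produces an integrand bounded below by $c(x,y,t)\, z^{-1}$, which is non-integrable at $z = 0$; integrating also over $s \in [t/3, 2t/3]$ shows that the double integral equals $+\infty$, whence $\tp(t,x,y) = +\infty$ for all $t>0$ and $x,y\in D$.

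The main obstacle is the first step: rigorously reorganizing the perturbation series of $p_D$ around $\tp^*$. For nonnegative kernels this reduces, by Tonelli's theorem, to the combinatorial identity $\sum_{m=0}^n \binom{n}{m}\eta^m(\kappa^*)^{n-m} = \kappa^n$ applied at each iteration level, and it causes no rearrangement issues even when $\tp$ is infinite. Once this reorganization is granted, the divergence is driven by the enhanced boundary singularity $\tp^*(s,x,z)\approx z^{\alpha/2-1/2}$ as $z\downarrow 0$ at the critical exponent $\delta = 1/2$, which is strictly stronger than the $z^{\alpha/2}$ boundary decay of the unperturbed kernel $p_D$; it is precisely this sharper singularity that forces the first iterate of the reorganized series to diverge logarithmically at $z = 0$ as soon as $\kappa > \kappa^*$.
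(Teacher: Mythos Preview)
Your proposal is correct and follows essentially the same approach as the paper: view $\tp$ as the perturbation of the critical kernel $\tp^*$ by $(\kappa-\kappa^*)z^{-\alpha}$ (the paper cites \cite{MR2457489} for this, while you justify it via Tonelli and the binomial identity), then use $\tp \ge \tp^*$ to bound below by $\int_0^t\!\int_D \tp^*(s,x,z)\,z^{-\alpha}\,\tp^*(t-s,z,y)\,dz\,ds$ and invoke the $\delta=1/2$ estimates of Theorem~\ref{thm:main} to see the divergence. Your treatment of the divergence step is in fact more explicit than the paper's, which simply records the lower bound \eqref{eq:tp*Approx} and declares the integral infinite; your localization to $s\in[t/3,2t/3]$, $z\in(0,z_0)$ makes the $z^{-1}$ non-integrability transparent.
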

	
	\begin{proof}
		Denote by $\tp^{*}$ the perturbation of $p_D$ by $\kappa^*x^{-\alpha}$. By using Theorem \ref{thm:main}, we get 
		\begin{equation} \label{eq:tp*Approx}
			\tp^*(t,x,z) \geq c \left( 1 \wedge \frac{x^{\alpha}}{t} \right)^{(\alpha - 1)/(2\alpha)} \frac{z^{(\alpha - 1)/2}}{t^{(\alpha - 1)/(2\alpha)}}  \left( t^{-1/\alpha}\wedge \frac{t}{|x-z|^{1+\alpha}} \right), \qquad z < t^{1/\alpha}.
		\end{equation}
		The kernel $\tp$ may be considered as a perturbation of $\tp^{*}$ by $(\kappa-\kappa^*)x^{-\alpha}$ (see \cite{MR2457489}). By Duhamel's formula and \eqref{eq:tp*Approx}, 
		\begin{align*}
			\tp(t,x,y) &= \tp^*(t,x,y) + (\kappa-\kappa^*)\int_0^t \int_D \tp(t-s,x,z)z^{-\alpha}\tp^*(s,x,z)dz ds \\ 
			&\geq (\kappa-\kappa^*)\int_0^t \int_D \tp^*(t-s,x,z)z^{-\alpha}\tp^*(s,x,z)dz ds = \infty.
		\end{align*}
	\end{proof}
	
	\subsection*{Acknowledgments}
We thank Krzysztof Bogdan, Rupert Frank and Konstantin Merz for helpful comments on the paper.
	\bibliographystyle{abbrv}
	\bibliography{shlbib}
		
\end{document}